\newtheorem{theorem}{Theorem}[section]
\newtheorem{lemma}[theorem]{Lemma}
\newtheorem{proposition}[theorem]{Proposition}
\newtheorem{corollary}[theorem]{Corollary}
\theoremstyle{definition}
\newtheorem{definition}[theorem]{Definition}
\newtheorem{assumption}[theorem]{Assumption}
\newtheorem{remark}[theorem]{Remark}
\newtheorem{example}[theorem]{Example}
\theoremstyle{remark}
\newcommand{\F}{\mathscr{F}}
\newcommand{\R}{\mathbb{R}}
\newcommand{\N}{\mathbb{N}}
\newcommand{\C}{\mathbb{C}}
\newcommand{\e}{\varepsilon}
\newcommand{\f}{\varphi}
\newcommand{\Ltwo}{L^2([0,1];H)}
\newcommand{\Lp}{L^p([0,1];H)}
\DeclareRobustCommand{\rchi}{{\mathpalette\irchi\relax}}
\newcommand{\irchi}[2]{\raisebox{\depth}{$#1\chi$}}
\newcommand{\zak}{%
  \mathbin{\vrule height 1.6ex depth 0pt width
0.13ex\vrule height 0.13ex depth 0pt width 1.3ex}
}    %
\newcommand{\nor}[1]{\left\| #1 \right\|} %
\newcommand{\norh}[1]{{\left\| #1 \right\|}_{H_t}}
\newcommand\inn[2]{{\langle #1, #2 \rangle}}
\newcommand\inner[2]{{\langle #1, #2 \rangle}_{H_t}}
\newcommand\innerL[2]{{\langle #1, #2 \rangle}_{L^2}}
\DeclareMathOperator*{\argmin}{arg\,min}
\DeclareMathOperator{\Div}{div}
\DeclareMathOperator{\supp}{supp}
\DeclareMathOperator*{\esssup}{ess\,sup}
\newcommand{\curves}{C_{\rm w} ([0,1];\M(\overline{\Om}))}
\newcommand{\pcurves}{C_{\rm w} ([0,1];\M^+(\overline{\Om}))}
\newcommand{\B}{\mathcal{B}}
\newcommand{\M}{\mathcal{M}}
\newcommand{\weak}{\rightharpoonup}
\newcommand{\weakstar}{\stackrel{*}{\rightharpoonup}}
\newcommand{\Om}{\Omega}
\newcommand{\olom}{\overline{\Omega}}
\newcommand{\de}{\partial}
 \title[An optimal transport approach for dynamic inverse problems]{An optimal transport approach for solving dynamic inverse problems in spaces of measures} 
 \author[K. Bredies]
 {Kristian Bredies}
 \address[Kristian Bredies]{University of Graz, Institute of Mathematics and Scientific Computing, Heinrichstra\ss e 36, 8010 Graz, Austria}
\email{Kristian.Bredies@uni-graz.at}
\author[S. Fanzon]
{Silvio Fanzon}
\address[Silvio Fanzon]{University of Graz, Institute of Mathematics and Scientific Computing, Heinrichstra\ss e 36, 8010 Graz, Austria}
 \email{Silvio.Fanzon@uni-graz.at}
\begin{document}

\begin{abstract}
\small{
In this paper we propose and study a novel optimal transport based regularization of linear dynamic inverse problems. The considered inverse problems aim at recovering a measure valued curve and are dynamic in the sense that (i) the measured data takes values in a time dependent family of Hilbert spaces, and (ii) the forward operators are time dependent and map, for each time, Radon measures into the corresponding data space. 
The variational regularization we propose is based on dynamic (un-)balanced optimal transport which means that the measure valued curves to recover (i) satisfy the continuity equation, i.e., the Radon measure at time $t$ is advected by a velocity field $v$ and varies with a growth rate $g$, and (ii) are penalized with the kinetic energy induced by $v$ and a growth energy induced by $g$.
We establish a functional-analytic framework for these regularized inverse problems, prove that minimizers exist and are unique in some cases, and study regularization properties.
This framework is applied to dynamic image reconstruction in undersampled magnetic resonance imaging (MRI), modelling relevant examples of time varying acquisition strategies, as well as patient motion and presence of contrast agents.

\vskip .3truecm \noindent Key words: dynamic inverse problems, optimal transport regularization, continuity equation, time dependent Bochner spaces,
dynamic image reconstruction, dynamic MRI.
\vskip.1truecm \noindent 2010 Mathematics Subject Classification:
65J20,
49J20,
35F05,
46G12,
92C55.
}
\end{abstract}
 
\maketitle
\tableofcontents

\section{Introduction}
In this paper we are concerned with solving ill-posed dynamic inverse problems where the sought unknown is a curve of Radon measures. We propose to regularize such problems via balanced and unbalanced dynamic optimal transport and establish a functional-analytic framework that takes the specificities of dynamic inverse problems, such as the time-varying nature of the measurement process, into account. Well-posedness as well as regularization properties are proven, and the application
to magnetic resonance imaging (MRI) is discussed.

Our motivation to consider dedicated strategies for dynamic inverse problems
arises from the shortcomings of static %
reconstruction strategies for
inverse problems in the presence of motion during the measurement process.
In the %
static case, measurement data is usually continuously collected such that
 sufficiently many data is available to enable the unique solution of
 the underlying inverse problem. In this context, one has to assume that no dynamics occur during the measurement process.
 However, this assumption is often violated for many applications, including
 medical imaging techniques such as MRI %
 and
 computed tomography (CT) that image, e.g., the beating heart or the lung while breathing.
 Consequently, a consistent reconstruction is
 no longer possible and static approaches usually admit motion artifacts.
 A strategy to overcome this is to temporally resolve the dynamics, 
 meaning that for each time
 instance during the measurement, one seeks to reconstruct a solution where
 only a small fraction of the
 necessary data is available. In addition to that, generally, in each time instance,
 a different part of the data set is  acquired. This results in a dynamic inverse problem with time-variant forward operators and data spaces, where for each fixed time instance, the corresponding
 inverse problem is massively underdetermined. In order to solve such a challenging problem, both an appropriate dynamic regularization strategy
 as well as a suitable modelling of forward operators and data spaces is necessary.

 We propose and study a regularization strategy that bases on optimal transport energies, both in a balanced and unbalanced context, see below for a detailed
 description. Such strategies are naturally linked with 
 curves of Radon measures, inverse problems in the space of Radon measures
 and appropriate Radon-norm-based regularizers.
 Indeed, the fact that for each point in time, an inverse problem
 with underdetermined data has to be solved calls for dedicated regularization
 such as the intensively studied sparsity-promoting $\ell_1$-type penalties. 
 In the discrete setting, this leads to the celebrated theory of compressed-sensing \cite{crt,donoho2006compressedsensing}, in which one is able to reconstruct the unknown starting from very few random measurements, yielding better stability properties. The continuous, infinite-dimensional counterpart %
 is given by the space of Radon measures \cite{bp,fernandezgranda2014superres}, where the regularization can be achieved penalizing the Radon norm
 and formulating the inverse problem in measure space.
 Recovering the unknown from very few observations is then possible since the data admits redundancies, particularly in applications to medical imaging. 

 In the dynamic setting,
 data redundancy additionally needs to be exploited by taking time correlation  into account. Indeed, one can expect displacements between %
 consecutive time samples to be small, and incorporate this information in the regularizer in order to achieve better reconstruction. In particular, the fluid-mechanics formulation of both balanced and unbalanced optimal transport, known as Wasserstein \cite{ags,bb} and Wasserstein--Fisher--Rao distance \cite{chizat,kmv,liero}, respectively, are particularly well-suited to keep track of motion and possible mass change occurring in the ground truth. Further, this formulation is based on curves of Radon measures and is thus attractive for the regularization of dynamic inverse problems, where in each time instance, a Radon measure should be recovered.
 The regularization is then enforced by subjecting potential solutions of
 the dynamic problem, i.e., curves of Radon measures, to
 the continuity equation, possibly with source, while at the same time penalizing displacement field and growth rate. As we will see, this
 approach indeed establishes a convex regularization strategy that
 is
 sparsity-promoting in each time frame, exploits data redundancy in time and
 intrinsically recovers the velocity field associated with the motion as well
 as the rate of brightness changes.
 Let us emphasize that in particular, the approach allows for continuous measurement in time while providing spatial Radon-norm regularization. In contrast, a straightforward generalization of \cite{bp} to the space-time cylinder would, e.g., allow for measures that are singular in time and hence, not regular enough to consistently define global-in-time forward operators and data discrepancies.
 
 Another aspect that has to be considered for dynamic inverse
 problems is a faithful modelling of the measurement process with respect
 to time,
 taking into account the time-varying nature of the measurements.
 In this paper, the latter is achieved by the construction of ad hoc Bochner-type spaces in which the data can take values in a time-dependent family of Hilbert spaces, which are correlated in time in a very weak way. This enables us to
 model the dynamic inverse problem with a time-dependent family of linear forward operators, mapping Radon measures to the associated data Hilbert space
 in each time instance, by a global-in-time forward operator
 that takes curves of Radon measures to the time-varying measurement data,
 making it thus possible to consistently define a data-mismatch term.
 In this respect, our model is truly dynamic
 and well-adapted to undersampled data as outlined above.

 The overall approach then %
 realizes a reconstruction
 by inverting the global %
 forward operator subject to optimal transport penalization and continuity equation. Such an approach leads to a well-posed and convex variational problem of Tikhonov type, in which we are able to reconstruct the sought solution, along with the displacement field and mass growth rate.
 The main task of the paper is to establish a rigorous functional-analytic framework in which to set the problem and to obtain well-posedness and stability properties for the proposed variational optimal transport-based regularization. We then apply our theoretical results to dynamic medical imaging, focusing on the case of undersampled MRI, %
 showing that we are able to treat an almost arbitrary variety of sampling strategies, as well as being principally able to reconstruct the image sequence, recover the motion displacement and track the possible %
 presence of contrast agents.
 
 Let us shortly review the existing literature on dynamic inverse problems
 and optimal transport approaches for inverse problems, image processing as well as computer vision.
 While the theory of regularization of dynamic inverse problems is a relatively new field of research \cite{shb}, regularization theory for static ill-posed inverse problems dates back several decades.
 In this context, the approach in this paper can be classified as
 Tikhonov regularization in Banach space, a well-established technique
 where one penalizes the data mismatch by a convex regularizer and solves the corresponding variational problem \cite{ehn,skhk,ta,tly}.
For computer vision and
image processing applications, research in the last decades focused on
specific convex regularizers \cite{sgghf} such as, for instance, edge-preserving
functionals (total variation \cite{cl,rof}, total generalized variation
\cite{brediesholler,bkp}), or functionals that enforce sparsity with 
respect to a given
basis, frame or learned dictionary \cite{aeb,crt,ddd,frikel,vgvvpvs}.
In this context, Radon-norm penalties can also be interpreted
as sparsity-promoting regularization \cite{bp,fernandezgranda2014superres}.

Concerning optimal transport, the classical theory deals with the problem of transporting mass from a probability distribution into a target one, while minimizing, e.g., the average squared displacement. Such a minimization problem defines a metric over the space of probability measures, called Wasserstein distance \cite{ags,cuturi2019,sant,villani}. In \cite{bb}, the authors showed that the Wasserstein distance can be computed via solving a convex dynamic problem that corresponds to finding a
geodesic path in the space of probabilities subjected to the continuity equation that minimizes the kinetic energy. This formulation is the basis for the balanced optimal-transport regularization studied in this paper.
Such approach, however, intrinsically assumes mass constancy, which is not always desired in applications, e.g., in mathematical imaging.
In the recent years, several ways to overcome this limitation were proposed, leading to
so-called unbalanced optimal transport \cite{benamou03,figalli10,figalli10bis,GANGBO2019108940,lm2015,mrss15,pr14,pr16}. %
In this context, common strategies are to
add a source term to the continuity equation and consequently, to the kinetic energy, or to allow mass to escape/enter the domain, by interpreting the boundary as an infinite reservoir.
In this paper, the energy introduced independently in \cite{chizat,kmv,liero}, known as Wasserstein--Fisher--Rao or Hellinger--Kantorovich, is used to provide a unbalanced optimal-transport regularization.  The remarkable feature of such formulation is that geodesics have a clear meaning, as they can be interpreted as joint displacement and change of mass, and therefore capture the dynamics of, e.g., image sequences.

Returning to inverse problems, as already mentioned, research in the dynamic framework  
recently gained some momentum \cite{shb},
where convex regularizers that penalize the time derivative, interpret the space-time cylinder as a higher-dimensional set or enforce a spatio-temporal decomposition of low rank have been studied in the literature, most prominently in the context of medical imaging applications \cite{dlwk,hk,lhdj,ocs,shsbs,sl,ws}.
In comparison, such approaches, however, do only implicitly account for motion information in contrast to the proposed optimal-transport regularizer which explicitly yields a motion field.
In this respect, the
employment of optimal transport energies as regularizers for inverse
problems is a very recent development. Here, existing literature
mainly focuses on static inverse problems and static optimal transport
leading, for instance, to Wasserstein-distance type regularization
\cite{kr,mbmoj}. In contrast, dynamic optimal transport has been utilized for
specific image processing and computer vision tasks such as image
interpolation \cite{chizat,hmp,ppo}. We also mention the work \cite{schmitzer}, which appeared after the present work. In \cite{schmitzer}, the authors propose to regularize an inverse problem related to PET image reconstruction through balanced optimal transport, subsequently applying it to the problem of tracking radiolabelled cells. The regularizer they propose is similar to ours, however the forward operator they consider is static and application-specific, whereas we are able to deal with general dynamic inverse problems. Moreover, their %
analytical framework is greatly simplified, dealing only with discretized unknowns in space-time satisfying a discrete version of the continuity equation, rather than with actual curves of measures, which is the natural framework to obtain well-posedness, as proposed in the present paper. %
To the best
knowledge of the authors, no other works %
employ dynamic optimal transport regularization for dynamic inverse problems.
In particular, a framework for recovering curves of Radon measures from
continuously acquired measurements does not exist to date.
Let us also mention that the realization of the time-dependent Bochner
spaces introduced in this paper is new. Indeed, existing approaches
usually assume that almost every data space %
is isomorphic, which is
sufficient to model, i.e., function spaces over time-varying domains
\cite{elliott, toader}. Such isomorphy is not required in our approach
which can
thus be used to model very general data acquisition strategies.

The paper is organized as follows.
In the remainder of this section, we precise the mathematical setting employed for regularizing dynamic inverse problems and summarize the main theoretical results obtained (Section \ref{sec:intro:theory}), including details on the MRI application (Section \ref{sec:intro:mri}). %
In Section \ref{sec:optimal} we lay the theoretical foundations to rigorously define the optimal transport regularizer. %
In Section \ref{sec:hilbert setting} we introduce and study the above mentioned class of time dependent Bochner spaces, which will be used to model the data measurements. After this preliminary part, in Section \ref{sec:regularization}, we  introduce the Tikhonov regularization for the dynamic inverse problem and show well-posedness as well as regularization properties. 
In Section \ref{sec:MRI} we apply our theoretical results to dynamic MRI, also providing examples of sampling strategies.  Finally, Section~\ref{sec:conclusions} concludes with some perspectives for future research and some comments on the related paper \cite{extremal} as well as forthcoming work, in which we perform numerical analysis for the model proposed in this paper.

\subsection{Outline of the mathematical setting and main theoretical results} \label{sec:intro:theory}

Let $\Om \subset \R^d$ be an open and bounded domain, with $d \in \N$, $d \geq 1$, and consider a time variable $t \in [0,1]$. Let $H_t$ be a time-dependent collection of Hilbert spaces modelling the data. The time regularity required for such family will be very mild, as specified below. %
At each time instance $t$ corresponds a given linear continuous forward operator $K_t^*$, mapping from the space of Radon measures $\M(\olom)$ into %
$H_t$. We consider the following inverse problem: Given some data $f_t \in H_t$ for $t \in [0,1]$, find a curve of Radon measures $t \in [0,1] \mapsto \rho_t \in \M(\olom)$ such that
\begin{equation} \label{intro inverse}
K_t^* \rho_t = f_t \, \quad \text{ for a.e. } \quad t \in [0,1] \,.
\end{equation}
We propose to regularize \eqref{intro inverse} by means of balanced/unbalanced  optimal transport. This is enforced by subjecting $\rho_t$ to the continuity equation
\begin{equation}
  \label{eq:intro_cont_eq}
  \de_t \rho_t + \Div (v_t \rho_t ) = g_t \rho_t  \quad \text{ in } \quad (0,1) \times \olom \,,
\end{equation}
where $v_t (x) \colon (0,1) \times \olom \to \R^d$ is a flow field transporting the mass $\rho_t$, while $g_t(x) \colon (0,1) \times \olom \to \R$ is a growth rate keeping track of mass creation and destruction, thus allowing for local mass change. %
We point out that no initial conditions are prescribed on $\rho_t$ in \eqref{eq:intro_cont_eq}, since in the context of the inverse problem~\eqref{intro inverse} we only have available indirect measurements on the whole time interval $[0,1]$. 
We propose to regularize \eqref{intro inverse} by minimizing the Tikhonov functional 
\begin{equation}
\min_{\rho_t, v_t, g_t} \ \frac12 \int_0^1 \|K_t^* \rho_t - f_t
\|_{H_t}^2 \,dt 
+\frac{\alpha}{2} \int_0^1 \int_{\olom}  |v_t(x)|^2 +  \delta^2 
 |g_t(x)|^2  \, d\rho_t(x) \, dt + \beta \int_0^1 \rho_t(\olom) \, dt   \,,\label{eq:intro_tikh} 
\end{equation}
subject to \eqref{eq:intro_cont_eq}.
Here, $\alpha,\beta > 0$ are regularization parameters, $\delta \in (0, \infty]$ is a penalty parameter %
 and the optimization is done for the triple $(\rho_t,v_t,g_t)$.  
The second term in \eqref{eq:intro_tikh} is known in the literature as Wasserstein--Fisher--Rao energy for unbalanced optimal transport \cite{chizat,kmv,liero}, and as Benamou--Brenier energy \cite{bb} for balanced optimal transport when %
$\delta=\infty$, enforcing $g_t=0$ and hence mass preservation.

Our main task is to establish
problem~\eqref{eq:intro_tikh} subject to~\eqref{eq:intro_cont_eq} as a regularizer for \eqref{intro inverse} in a rigorous functional-analytic framework. 
In the following we provide some details on how to make the terms appearing in \eqref{eq:intro_tikh} rigorous, in particular providing suitable assumptions on $K_t^*$ and $H_t$. The natural space in which to cast \eqref{eq:intro_tikh} is given by $\M := \M(X) \times \M(X;\R^d) \times \M(X)$ where $X:=(0,1) \times \olom$. For $(\rho,m,\mu) \in \mathcal{M}$ define the transport energy as the 1-homogeneous convex functional 
\begin{equation} \label{intro convex}
B_\delta (\rho,m,\mu) := \int_{X} \, \Psi_\delta \left( \frac{d\rho}{d\lambda}, \frac{dm}{d\lambda}, \frac{d\mu}{d\lambda} \right) \, d\lambda \,,
\end{equation}
where $\lambda \in \M^+(X)$ is any positive measure such that $\rho, m,\mu \ll \lambda$ and for $(t,x,y) \in \R \times \R^d \times \R$ we define $\Psi_\delta(t,x,y):=\frac{|x|^2+\delta^2 y^2}{2t}$ if $t>0$, $\Psi_\delta(0,0,0):=0$ and $\Psi_\delta(t,x,y):=\infty$ in all other cases. %
Introduce the affine set $\mathcal{D}:=\left\{ (\rho,m,\mu) \in \M \, \colon \, \de_t \rho + \Div m = \mu  \right\}$ where the continuity equation is in the distributional sense, without initial conditions (Definition \ref{def:meas sol}). Whenever $(\rho,m,\mu) \in \mathcal{D}$ and $B_\delta(\rho,m,\mu)<\infty$, it follows that $\rho \geq 0$, $m=v \rho$ and $\mu=g \rho$. Moreover $\rho=dt \otimes \rho_t$ with $t \mapsto \rho_t \in \M(\olom)$ narrowly continuous, i.e.,
$t \mapsto \int_{\olom} \f (x) \, d \rho_t(x)$ is continuous for all $\f \in C(\olom)$ (Proposition \ref{prop:cont rep}). By setting $\lambda = \rho$ in \eqref{intro convex} we recover the second term in \eqref{eq:intro_tikh} (Proposition \ref{prop:bb prop}). %
Next, we outline how we define the space of measurements. Assume given a family of real Hilbert spaces $\{H_t\}_t$ for $t \in [0,1]$, with inner products denoted by $\inner{\cdot}{\cdot}$, satisfying the following. %
\begin{assumption} \label{intro:assumption:1} There exist a Banach space $D$ and linear continuous operators $i_t \colon D \to H_t$ with the properties:
	\begin{enumerate}[label=\textnormal{(H\arabic*)}]
	\item $\nor{i_t} \leq C$ for some constant $C>0$ not depending on $t$,\label{H1i}
	\item $i_t(D)$ is dense in $H_t$,%
	\item the map $t \in [0,1] \mapsto \inner{i_t \f}{ i_t \psi} \in \R$ is Lebesgue measurable for every fixed $\f,\psi \in D$.\label{H3i}
        \end{enumerate}
	\end{assumption}
In other words, we assume that each $H_t$ possesses a dense subset $i_t(D)$, and such subsets are related by the time-measurability condition \ref{H3}. %
In particular, Assumption \ref{intro:assumption:1} allows us to define suitable notions of strong measurability and integrability for measurements $f \colon [0,1] \to H$ for $H := \cup_{t \in [0,1]} H_t$ such that $f_t \in H_t$ for a.e.~$t$ in $[0,1]$ (see Definitions \ref{def:strong}, \ref{def:int}), leading to the definition of the measurements space 
\begin{equation} \label{intro:L2}
\Ltwo := \left\{ f \colon [0,1] \to H \, \colon \, f \ \text{strongly measurable}, \int_0^1 \norh{f_t}^2 \,dt < \infty \right\}\,.
\end{equation}
	In Theorem \ref{thm:completeness} we show that \eqref{intro:L2} is a Hilbert space with $\inn{f}{g}_{L^2}:=  \int_0^1 \inn{f_t}{g_t}_{H_t} \, dt$.  
Notice that our construction provides a natural extension of the classic Bochner theory to the case of varying codomains, in the sense that \eqref{intro:L2} coincides with the classical Bochner space when $H_t = H$ for all $t$, with $H$ given Hilbert space.  Details about the above construction are contained in Section \ref{sec:hilbert setting}.
We now address the assumptions we make on the forward operators $K_t^*$ appearing in \eqref{intro inverse}. %
	
	\begin{assumption}
 For a.e.~$t \in [0,1]$ the linear continuous operators $K_t^* \colon \M(\olom) \to H_t$ satisfy:
	\begin{enumerate}[label=\textnormal{(K\arabic*)}]
	\item $K_t^*$ is the adjoint of a linear continuous operator $K_t \colon H_t \to C(\olom)$,\label{K1i}
	\item $\nor{K_t} \leq C$ for some constant $C>0$ not depending on $t$,%
	\item the map $t \in [0,1] \mapsto K_t^* \rho \in H_t$ is strongly measurable for every fixed $\rho \in \M(\olom)$.\label{K3i}
        \end{enumerate}
\end{assumption}

Under \ref{K1i}--\ref{K3i}, \ref{H1i}--\ref{H3i} we have the following:  %
if $t \mapsto \rho_t \in \M(\olom)$ is narrowly continuous 
then the map $t \mapsto K_t^* \rho_t$ belongs to $\Ltwo$ (Lemma \ref{prop:good definition}). At this point, we are ready to rigorously define the regularization functional anticipated in \eqref{eq:intro_tikh}
as $J \colon \M \to [0,\infty]$, where
\begin{equation} \label{intro J}
J(\rho,m,\mu) := \frac{1}{2} \nor{K_t^* \rho_t - f_t}_{L^2}^2 + \alpha  B_\delta (\rho,m,\mu) + \beta \nor{\rho}_{\M(X)} \,,  
\end{equation}
if $(\rho,m,\mu) \in \mathcal{D}$ and $J:=\infty$ otherwise. The discrepancy term in $J$ is well defined since, if $(\rho,m,\mu) \in \mathcal{D}$ and $B_\delta (\rho,m,\mu)<\infty$, then $\rho=dt \otimes \rho_t$ with $t  \mapsto \rho_t$ narrowly continuous, so that $t \mapsto K_t^* \rho_t$ belongs to $\Ltwo$ (Proposition \ref{prop:well def}). %
Notice that, in addition to the regularizer $B_\delta(\rho,m,\mu)$, we also included $\nor{\rho}_{\M(X)}$ in the definition of $J$: This serves the purpose of enforcing weak* coercivity on $J$, since no initial data on $\rho$ is prescribed. %
 Our main theoretical results concerning existence of minimizers for $J$ and regularization properties are summarized in the following statements, which are contained in Theorem \ref{thm:existence} and Theorems \ref{thm:stability}, \ref{thm:vanishing}, respectively. 

\begin{theorem}\label{intro main thm}
Assume \ref{H1i}--\ref{H3i}, \ref{K1i}--\ref{K3i}. Let $f \in \Ltwo$, $\alpha, \beta >0$. Then %
$J$ admits a minimizer $(\rho,m,\mu) \in  \mathcal{D}$ satisfying $\rho \geq 0$, $\rho= dt \otimes \rho_t$ with $t \mapsto \rho_t$ narrowly continuous. If in addition the operators $K_t^*$ are injective for a.e.~$t \in [0,1]$, then the minimizer is unique.
\end{theorem}
In the next theorem, $J_{\alpha,\beta,f}$ denotes the functional $J$ in~\eqref{intro J} for  $\alpha, \beta > 0$ and $f \in L^2([0,1]; H)$.
\begin{theorem}[Regularization] \label{intro thm regularization}
Assume \ref{H1i}--\ref{H3i}, \ref{K1i}--\ref{K3i}. Let $f^\gamma, f^\dagger \in \Ltwo$ be noisy and exact data respectively, for noise level $\gamma>0$. 
\begin{enumerate}
\item[i)] Suppose that $f^n \to f^\gamma$ strongly in $L^2$, $\alpha,\beta>0$ and $(\rho^n,m^n,\mu^n) \in \argmin J_{\alpha,\beta,f^n}$., %
Then, up to subsequences, $(\rho^n,m^n,\mu^n)$ converges weakly* to $(\rho,m,\mu) \in \argmin J_{\alpha,\beta,f^\gamma}$.%
\item[ii)] Assume that $\nor{f^{\gamma_n} - f^\dagger}_{L^2} \leq \gamma_n$ and $\alpha_n, \beta_n \searrow 0$, such that $\gamma_n^2 /\min \{ \alpha_n, \beta_n\} \to 0$. If $(\rho^n,m^n,\mu^n) \in \argmin J_{\alpha_n,\beta_n,f^{\gamma_n}}$ then, up to subsequences, $(\rho^n,m^n,\mu^n)$ converges weakly* to $(\rho^\dagger,m^\dagger,\mu^\dagger) \in \mathcal{D}$ solving \eqref{intro inverse} and there exist $\alpha^*,\beta^* \in [1,\infty]$ such that
\[
(\rho^\dagger,m^\dagger,\mu^\dagger) \in \argmin  \,\, \alpha^* B_\delta (\rho,m,\mu) + \beta^* \nor{\rho}_{\M (X)} \,.
\]  
\end{enumerate}
\end{theorem}

\subsection{Application to dynamic MRI} \label{sec:intro:mri}
We apply the model \eqref{intro inverse} and its regularization \eqref{eq:intro_tikh} to undersampled dynamic MRI, yielding a reconstruction
approach via convex optimization which is principally capable of capturing
motion during the acquisition.
A common limiting factor to medical imaging techniques and MRI in particular
is acquisition speed such that, for instance,
data cannot be collected sufficiently fast in order to temporally resolve the beating heart or the lung while breathing. Consequently, static reconstruction approaches lead to severe artifacts. Thus, motion has to be taken into account by considering the dynamic setting in which at each time instance, data is severely undersampled and temporal data redundancies have to be exploited. For this purpose, we show that the optimal-transport regularization framework developed in this paper can be applied, leading to a regularizer that penalizes the displacements caused by motion and intrinsically recovers the motion field as well as the growth rate.

The forward problem in undersampled dynamic MRI in two dimensions is commonly stated as
follows: In each time instance $t$, the proton density $\rho_t$, a non-negative quantity,  needs to be
recovered from the measured data $f_t$. Taking coil sensitivities into account, $\rho_t$ and $f_t$ are linked via the Fourier transform. However, for each $t$, the Fourier
data is only acquired on subsets specified by the sampling strategy, leading to each $f_t$ generally living on a different subset of the so-called $k$-space and hence, being contained in a time-varying data Hilbert space $H_t$. Modelling the proton density
$\rho_t$ as a positive measure on the %
image domain $\Omega \subset \R^2$, denoting by $K_t^*$ an appropriately masked Fourier transform and considering
the unit time interval $[0,1]$,
the forward problem then indeed reads as $K_t^*\rho_t = f_t$ in $H_t$
for $t \in [0,1]$. This is made precise in the following.

Adopting the common model for parallel data acquisition 
(see, e.g., \cite{pruessmann,kcbus,kbpr,shsbs}),
let $\Om \subset \R^2$ be an open bounded domain representing the image domain and
let %
the complex coil sensitivities $c_j \in C_0(\R^2;\C)$ for $j=1,\dots,N$ with $N \geq 1$ to each of the $N$ receiver coils be given. The time-dependent sampling method is represented by a family of measures $\sigma_t \in \M^+ (\R^2)$ for $t \in [0,1]$. Such measures are required to satisfy some mild regularity assumptions, namely,
\begin{enumerate}[label=\textnormal{(M\arabic*)}]
\item $\nor{\sigma_t}_{\M(\R^2)} \leq C$ for a.e.~$t \in [0,1]$, where $C >0$ does not depend on $t$,\label{M1i}
\item the map $t \mapsto \int_{\R^2} \f(x) \, d\sigma_t(x)$ is measurable for each $\f \in C_0(\R^2;\C)$,\label{M2i}
\end{enumerate}
allowing for a variety of sampling methods, see Section~\ref{sec:MRI} for details.
The data space of measurements is then defined by $H_t:=L^2_{\sigma_t}(\R^2;\C^N)$, interpreted as a real Hilbert space and equipped with the norm $\norh{h}^2:=\sum_{j=1}^N \int_{\R^2} |h^j(x)|^2 \, d\sigma_t(x)$, where we denote $h=(h^1,\dots,h^N)$.
The forward operators  are given by
$K_t^* \colon \M(\olom) \to H_t$ defined via
	\[
	K_t^* \rho_t := (\F(c_1\rho_t), \dots, \F(c_N\rho_t)) \,,
 	\]
	where $\F$ is the Fourier transform and we interpret each $\F(c_j \rho_t)$ as
        an element of $L^2_{\sigma_t}(\R^2,\C)$.
        In Lemma \ref{MRI H} we show that under \ref{M1i}--\ref{M2i}, the spaces $H_t$ and the forward operators $K_t^*$ fulfill \ref{H1i}--\ref{H3i} and \ref{K1i}--\ref{K3i}, respectively. In this way the hypotheses of Theorems \ref{intro main thm}, \ref{intro thm regularization} are satisfied, and we can regularize the reconstruction problem \eqref{intro inverse} with the functional $J \colon \M \to [0,\infty]$ defined in \eqref{intro J}, which in this framework corresponds to 
	\[
	J(\rho,m,\mu):=\frac{1}{2} \sum_{j=1}^N \int_0^1 \nor{\F(c_j \rho_t)-f_t^j}_{L^2_{\sigma_t}(\R^2;\C)}^2   \, dt  + 
\alpha B_\delta (\rho,m,\mu) + \beta \nor{\rho}_{\M((0,1) \times \olom)} \,,
	\] 
where the measurements $f_t=(f_t^1,\dots,f_t^N)$ belong to $\Ltwo$. 
This shows in particular that optimal-transport regularization for undersampled
dynamic MRI leads to well-posed convex optimization problems. These are accessible
to analysis as well as efficient and stable numerical minimization algorithms.
Section~\ref{sec:conclusions} %
provides some perspectives
for the latter. %

\section{Dynamic optimal transport} \label{sec:optimal}

The aim of this section is to provide the essential elements to define the optimal transport regularizer appearing in \eqref{intro J}. We refer the reader to Appendix \ref{app:measure:prel} for measure theory definitions and results which will be needed in the following. Throughout the section, $\Om \subset \R^d$ is an open bounded domain, with $d \in \N$, $d \geq 1$ and we set $X:=(0,1) \times \olom$ to be the time-space cylinder. We also define the space $\M := \M(X) \times \M(X;\R^d)\times \M(X )$.  
In Section \ref{sec:continuity equation} we introduce the concept of measure solution to the continuity equation with source
\begin{equation}  \label{sec2 cont}
\de_t \rho + \Div m = \mu    \,\, \text{ in } \,\, X \,, 
\end{equation}
where $(\rho,m,\mu) \in \M$. Here $\rho$ represents a density, $m$ a momentum field advecting $\rho$ and $\mu$ a source term, accounting for local mass change. We then investigate properties of solutions $\rho \in \M (X)$ of \eqref{sec2 cont}. In particular in Proposition \ref{prop:dis} we show that positive solutions to \eqref{sec2 cont} disintegrate as $\rho = dt \otimes \rho_t$ with $\{\rho_t\}_{t \in [0,1]}$ Borel family of positive measures over $\olom$. In Proposition \ref{prop:cont rep} we prove that, under some growth assumptions on $m$ and $\mu$, the curve $t \mapsto \rho_t$ is actually narrowly continuous. 
Finally, in Section \ref{sec:energy} we introduce the optimal transport energy $B_\delta$ at \eqref{intro J}, and list some of its properties in Proposition \ref{prop:bb prop}.

\subsection{Continuity equation} \label{sec:continuity equation}

We want to consider measure valued (distributional) solutions to the continuity equation \eqref{sec2 cont}
with suitable boundary conditions. The precise definition is as follows.

\begin{definition} \label{def:meas sol}
We say that $(\rho,m,\mu) \in \M$ is a \textit{measure solution} to \eqref{sec2 cont} if
\begin{equation} \label{cont weak}
\int_{X}  \de_t \f \, d\rho + 
\int_{X}  \nabla \f \cdot  dm +
\int_{X}  \f \,  d\mu  = 0   \quad \text{for all} \quad \f \in C^{\infty}_c ( X ) \,.
\end{equation}
\end{definition}

We remark that the above weak formulation includes zero flux boundary conditions for the momentum $m$ on $\de \Om$, and no initial and final data is prescribed on $\rho$. Moreover one can test \eqref{cont weak} against functions in $C^1_c ( X )$ (see \cite[Remark 8.1.1]{ags}). In the following proposition we show that positive solutions to  \eqref{sec2 cont} can be disintegrated with respect to the Lebesgue measure $dt$ on $(0,1)$ (see Section \ref{sec:disintegration} for details on disintegration). To this end, let $\pi \colon X \to (0,1)$ be the projection on the time coordinate.

\begin{proposition} \label{prop:dis}
Assume that $(\rho,m,\mu) \in \M$ satisfies \eqref{cont weak}, with $\rho \in \M^+(X)$.  Then $\rho$ disintegrates, with respect to $dt$, as $\rho =dt \otimes  \rho_t$, where $\rho_t \in \M^+(\overline{\Om})$ for a.e.~$t$. Moreover $t \mapsto \rho_t(\overline{\Om})$ is a function of bounded variation, with distributional derivative $\pi_{\#} \mu$. In particular, if the source $\mu = 0$, then the total mass $\rho_t(\overline{\Om})$ is constant in time.
\end{proposition}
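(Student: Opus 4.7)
The plan is to push the continuity equation forward through the time projection $\pi$, obtain a one-dimensional distributional identity between $\pi_\# \rho$ and $\pi_\# \mu$ on $(0,1)$, and read off both the absolute continuity required by Theorem \ref{thm:disint} and the BV regularity of the total mass from standard one-variable distribution theory.

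First I would test \eqref{cont weak} against space-independent functions $\varphi(t,x) := \phi(t)$ with $\phi \in C^\infty_c((0,1))$; such $\varphi$ lies in $C^\infty_c((0,1)\times\overline{\Om})$ precisely because $\overline{\Om}$ is compact, and its spatial gradient vanishes identically. Combining \eqref{cont weak} with the change-of-variables formula \eqref{change variable} then yields
\[
\int_0^1 \phi'(t) \, d(\pi_\# \rho)(t) = - \int_0^1 \phi(t) \, d(\pi_\# \mu)(t) \qquad \text{for every } \phi \in C^\infty_c((0,1)),
\]
so that $\pi_\# \rho$, viewed as a distribution on $(0,1)$, has distributional derivative equal to the finite signed measure $\pi_\# \mu$.

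Next I would invoke the standard one-dimensional fact that a distribution whose derivative is a finite measure is itself represented by a bounded BV function. Concretely, setting $F(t) := (\pi_\# \mu)((0,t))$, the distribution $\pi_\# \rho - F \, dt$ has zero distributional derivative and is therefore a constant $c$, so $\pi_\# \rho = (F+c)\, dt$. In particular $\pi_\# |\rho| = \pi_\# \rho \ll dt$, which is exactly the hypothesis of Theorem \ref{thm:disint}; the disintegration then provides a Borel family $\{\rho_t\}_{t \in (0,1)} \subset \M^+(\overline{\Om})$ such that $\rho = dt \otimes \rho_t$.

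Finally, applying the identity $\rho = dt \otimes \rho_t$ to time-only test functions gives $\int_0^1 \phi(t) \rho_t(\overline{\Om}) \, dt = \int_0^1 \phi(t)(F(t) + c)\, dt$ for every $\phi \in C_c((0,1))$, so $\rho_t(\overline{\Om}) = F(t) + c$ for a.e.\ $t$, which is BV with distributional derivative $\pi_\# \mu$. When $\mu = 0$ this derivative vanishes, forcing $t \mapsto \rho_t(\overline{\Om})$ to be constant almost everywhere. The delicate point is really the one-dimensional regularity lemma invoked in the third paragraph: once one knows that a distribution on $(0,1)$ with measure-valued derivative is represented by a BV function, the rest of the argument is a matter of push-forwards, Fubini, and a direct appeal to Theorem \ref{thm:disint}.
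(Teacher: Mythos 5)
Your argument is correct and follows essentially the same route as the paper: test \eqref{cont weak} with time-only functions to obtain $(\pi_\#\rho)'=\pi_\#\mu$ in $\mathcal{D}'((0,1))$, deduce $\pi_\#\rho\ll dt$ from the one-dimensional fact that a distribution with measure-valued derivative is a BV function (the paper writes $\pi_\#\mu=u'$ with $u\in BV((0,1))$ where you use the cumulative function $F$), apply Theorem \ref{thm:disint}, and identify $\rho_t(\overline{\Om})$ with the BV representative via $\pi_\#(dt\otimes\rho_t)=\rho_t(\overline{\Om})\,dt$. No gaps; your write-up is in fact slightly more explicit than the paper's at the step where absolute continuity of $\pi_\#\rho$ is extracted.
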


\begin{proof}
In order to apply Theorem \ref{thm:disint} we need to show that $\pi_\# \rho \ll dt$. Let $\tilde{\f} \in C^\infty_c ((0,1))$ and define $\f:=\tilde{\f} \circ \pi \in C^\infty_c(X)$. By plugging $\f$ in \eqref{cont weak} we get
$\int_0^1 \tilde{\f}' \, d (\pi_\# \rho) = -\int_0^1 \tilde{\f} \, d (\pi_\# \mu)$,
so that $(\pi_\# \rho)'=\pi_\# \mu$ in the sense of distributions. Since $\pi_\# \mu \in \M((0,1))$, there exists $u \in BV((0,1))$ such that $\pi_\# \mu=u'$. Therefore $\pi_\# \rho \ll dt$ and there exists a Borel family $\rho_t \in \M(\overline{\Om})$ such that $\rho=dt \otimes \rho_t$.
In particular, since $\rho_t$ is a Borel family, the map $t \mapsto \rho_t(\overline{\Om})$ is measurable. Moreover it belongs to $L^1((0,1))$, since 
$\int_0^1 |\rho_t(\overline{\Om})| \, dt  = \rho(X)$
which is finite by assumption. Finally notice that $\pi_\#(dt \otimes \rho_t) = \rho_t(\overline{\Om}) \, dt$, which together with $(\pi_\# \rho)'=\pi_\# \mu$ implies that $t \mapsto \rho_t(\overline{\Om})$ belongs to $BV((0,1))$, with distributional derivative given by $\pi_\# \mu$. 
\end{proof}

\begin{definition}
A curve $t \in [0,1] \mapsto \rho_t \in \M(\olom)$ is \textit{narrowly continuous} if the map
\[
t \mapsto \int_{\olom} \f(x) \, d\rho_{t}(x) 
\]
is continuous for every fixed $\f \in C(\overline{\Om})$. 
We denote by $\curves$ the set of such curves, and by $\pcurves$ the set of narrowly continuous curves of positive measures.
\end{definition}

In the next proposition we show that if $(\rho,m,\mu)$ solves the continuity equation with appropriate energy bounds, the disintegration measures $\rho_t$ are defined for every $t$ and are narrowly continuous. This is a well-known result for $\mu =0$.  For completeness we will carry out the proof in Appendix~\ref{app:meas:narrow}, by adapting the argument used to prove the homogeneous version (see Lemma 8.1.2 in \cite{ags}).

\begin{proposition}[Continuous representative] \label{prop:cont rep}
Let $(\rho,m,\mu) \in \M$ be a solution of \eqref{cont weak}, with $\rho \in \M^+(X)$. Let $\rho_t \in \M^+(\olom)$ be the disintegration of $\rho$ with respect to $dt$. Assume that $m=dt \otimes v_t \rho_t$ and  $\mu = dt \otimes g_t \rho_t$ with $v_t \colon X \to \R^d$, $g_t \colon X \to \R$ measurable functions such that
\begin{equation} \label{cont int}
\int_0^1 \int_{\olom} |v_t(x)| \, d\rho_t (x) \, dt < \infty  \quad \text{ and } \quad \int_0^1 \int_{\olom} |g_t(x)| \, d\rho_t (x) \, dt < \infty \,.
\end{equation}
 Then there exists a narrowly continuous curve $(t \mapsto \tilde{\rho}_t) \in \pcurves$ such that $\rho_t = \tilde{\rho}_t$ a.e.~in $(0,1)$. Moreover for each $\f \in C^1_c([0,1]\times \overline{\Om})$ and $0 \leq t_1 \leq t_2 \leq 1$ we have
\begin{equation} \label{cont boundary}
\int_{t_1}^{t_2} \int_{\olom} (\de_t \f +  \nabla \f \cdot v_t + \f  g_t) \, d\rho_t(x) \, dt = 
\int_{\olom} \f(t_2,x) \, d\tilde{\rho}_{t_2} (x) - 
\int_{\olom} \f(t_1,x) \, d\tilde{\rho}_{t_1} (x) \,.
\end{equation}
\end{proposition}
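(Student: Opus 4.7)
\medskip

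\noindent\textbf{Proof sketch.} The strategy follows the classical argument for the homogeneous case in \cite{ags}, modified to accommodate the source term $\mu = dt\otimes(g_t\rho_t)$. First, for each fixed $\phi \in C^1(\olom)$, I would consider the scalar function
\[
s_\phi(t) := \int_\olom \phi(x) \, d\rho_t(x),
\]
defined for $dt$-a.e.~$t\in(0,1)$. Testing \eqref{cont weak} against $\zeta(t)\phi(x)$ with $\zeta \in C^\infty_c((0,1))$ yields
\[
\int_0^1 \zeta'(t) s_\phi(t)\,dt = -\int_0^1 \zeta(t) h_\phi(t)\,dt, \qquad h_\phi(t) := \int_\olom \bigl(\nabla\phi \cdot v_t + \phi\, g_t\bigr)\,d\rho_t.
\]
By \eqref{cont int}, $h_\phi \in L^1(0,1)$, so $s_\phi \in W^{1,1}(0,1)$ and admits a unique continuous representative $\tilde{s}_\phi \in C([0,1])$.

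Next, I would fix a countable subset $\{\phi_n\} \subset C^1(\olom)$ whose linear span is uniformly dense in $C(\olom)$. Let $N_n \subset (0,1)$ be the null set off which $s_{\phi_n} = \tilde{s}_{\phi_n}$, and set $N := \bigcup_n N_n$. Proposition \ref{prop:dis} implies $t \mapsto \rho_t(\olom)$ is $BV$, hence uniformly bounded by some $M < \infty$; therefore
\[
\sup_{t\in[0,1]} |\tilde{s}_{\phi_n}(t)| \leq \|\phi_n\|_{C(\olom)} \cdot M
\]
for every $n$, by continuity together with $|s_{\phi_n}(t)| \leq \|\phi_n\|_\infty \rho_t(\olom)$ a.e. For each $t \in [0,1]$ the map $\phi_n \mapsto \tilde{s}_{\phi_n}(t)$ is linear on $\Span\{\phi_n\}$ with norm $\leq M$, and by density extends to a bounded linear functional on $C(\olom)$. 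Riesz representation furnishes a measure $\tilde\rho_t \in \M(\olom)$, positive because $\tilde{s}_{\phi_n}(t) \geq 0$ whenever $\phi_n \geq 0$ (by continuity from the a.e.~inequality). For $t\notin N$, passing to the limit in $\int\phi_n \,d\tilde\rho_t = \tilde{s}_{\phi_n}(t) = \int\phi_n\,d\rho_t$ along a sequence $\phi_n \to \phi$ in $C(\olom)$ gives $\tilde\rho_t = \rho_t$. Weak* continuity of $t \mapsto \tilde\rho_t$ follows because for any $\phi \in C(\olom)$ the functions $t \mapsto \tilde{s}_{\phi_n}(t)$ are continuous and converge uniformly in $t$ to $t \mapsto \int \phi\,d\tilde\rho_t$, by the uniform bound $\|\phi_n - \phi\|_\infty \cdot M$.

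For the boundary identity \eqref{cont boundary}, I would first treat $0 < t_1 < t_2 < 1$. Take a smooth family $\zeta_\e \in C^\infty_c((0,1))$ approximating $\chi_{(t_1,t_2)}$, with $\supp \zeta_\e \subset (t_1-\e, t_2+\e)$ and $\zeta_\e'$ converging in the sense of distributions to $\delta_{t_1} - \delta_{t_2}$. Testing \eqref{cont weak} (extended by approximation to $C^1_c$ test functions) against $\zeta_\e(t)\phi(t,x)$ for $\phi\in C^1_c([0,1]\times\olom)$ gives
\[
\int_0^1 \zeta_\e(t)\!\int_\olom (\de_t\phi + \nabla\phi\cdot v_t + \phi\, g_t)\,d\rho_t\,dt = -\int_0^1 \zeta_\e'(t)\, \tilde{S}(t)\,dt,
\]
where $\tilde{S}(t) := \int_\olom \phi(t,x)\,d\tilde\rho_t(x)$ is continuous on $[0,1]$ (by joint continuity of $\phi$, compactness of $\olom$, weak* continuity of $\tilde\rho_t$ and the uniform mass bound $M$). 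Sending $\e \to 0$, the LHS tends to $\int_{t_1}^{t_2}\int_\olom(\de_t\phi+\nabla\phi\cdot v_t+\phi g_t)\,d\rho_t\,dt$ and the RHS to $\tilde{S}(t_2) - \tilde{S}(t_1)$, yielding \eqref{cont boundary}. The cases $t_1 = 0$ or $t_2 = 1$ follow by taking interior $t_1' \searrow 0$ or $t_2'\nearrow 1$ and invoking continuity of $\tilde{S}$ together with dominated convergence on the integrand side.

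The main technical hurdle is step two: converting the pointwise-almost-everywhere statements (one per $\phi_n$) into a genuine pointwise-everywhere weak* continuous curve of measures. The uniform bound on $\esssup_t \rho_t(\olom)$ inherited from Proposition \ref{prop:dis} is what makes the diagonal extension by density both well defined and normable via Riesz.
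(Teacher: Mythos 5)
Your proof is correct and follows essentially the same route as the paper's: the $W^{1,1}((0,1)) \hookrightarrow C([0,1])$ argument applied to $t \mapsto \int_{\olom}\phi\,d\rho_t$ for each fixed spatial test function, a density/extension step producing a measure-valued weak* continuous representative, and the approximation of $\rchi_{(t_1,t_2)}$ by smooth cutoffs with $\zeta_\e' \to \delta_{t_1}-\delta_{t_2}$ for the boundary identity. The only (harmless) variation is that you extend the functionals $\phi \mapsto \tilde{s}_\phi(t)$ directly in the supremum norm and invoke Riesz representation, whereas the paper first extends to $C^1(\olom)^*$ and then identifies each $\tilde\rho_t$ as a positive measure via weak* sequential precompactness of $\{\rho_t\}$.
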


In the rest of the paper we will identify $\rho_t$ with its narrowly continuous representative $\tilde{\rho}_t$ whenever the assumptions of Proposition \ref{prop:cont rep} hold, and use the notation $\rho_t$.

\subsection{Optimal transport energy} \label{sec:energy}

We want to introduce the Wasserstein--Fisher--Rao energy (or Hellinger--Kantorovich) as originally done in \cite{chizat2,chizat,liero,liero2,kmv}. First, define the convex set 
\begin{equation} \label{keydelta}
K_\delta :=\left\{ (a,b,c) \in \R \times \R^d \times \R \, \colon \, a+ \frac{1}{2} \left(|b|^2 +\frac{c^2}{\delta^2}\right) \leq 0 \right\} \,,
\end{equation}          
with $\delta \in (0,\infty]$ fixed parameter and $\frac{c^2}{\infty} = 0$ for every $c \in \R$.
For $(t,x,y) \in \R \times \R^d \times \R$ set
\[%
\Psi_\delta (t,x,y):= \begin{cases}
 \frac{|x|^2 +\delta^2 y^2}{2t}    & \text{if } t >0 \,,\\
 0                   & \text{if } t=|x|=y=0 \,, \\
 \infty	         & \text{otherwise } \,,
 \end{cases}
\]%
where $\infty y^2 = \infty$ for $y \neq 0$ and $\infty y^2 = 0$ for $y=0$.
We have that $\Psi_\delta$ is the Legendre conjugate of the characteristic function $\rchi_{K_\delta}$ \cite{chizat}, that is,
\[
\Psi_\delta (t,x,y) = \sup_{(a,b,c) \in K_\delta}  (at + b \cdot x + cy) \quad \text{ for each } \quad (t,x,y) \in \R \times \R^d \times \R \,.
\]
In particular $f$ is convex, lower semicontinuous and 1-homogeneous.

\begin{definition}[Transport energy]
	Let $(\rho,m,\mu) \in \mathcal{M}$. We define the \textit{transport energy} as
\begin{equation} \label{bb}
B_\delta (\rho,m,\mu) := \sup \left\{ \int_X a \, d \rho + \int_{X} b  \cdot dm + \int_X c  \,d\mu \,\, \colon \, (a,b,c) \in C_0 (X;K_\delta) \right\} \,. 
\end{equation}
\end{definition}

We summarize some of the properties of the functional $B_\delta$ that will be needed throughout this paper. The proof is omitted, and it can be easily adapted from the one in \cite[Prop 5.18]{sant}.

\begin{proposition} \label{prop:bb prop}
The functional $B_\delta$ defined in \eqref{bb} is convex and lower semicontinuous for the weak* convergence. Moreover it satisfies the following properties:   
\begin{enumerate}[(i)]
\item $B_\delta (\rho,m,\mu) \geq 0$, 	
\item assume that $\rho,m,\mu \ll \lambda$ for some $\lambda \in \M^+(X)$. Then
\[
B_\delta (\rho,m,\mu)= \int_X \Psi_\delta \left( \frac{d\rho}{d\lambda}, \frac{dm}{d\lambda},\frac{d\mu}{d\lambda}\right) \, d \lambda \,,
\]
\item if $B_\delta (\rho,m,\mu)< \infty$ then $\rho \geq 0$ and $m,\mu \ll \rho$, 
\item if $\rho \geq 0$ and $m,\mu \ll \rho$, 
then $m = v \rho$, $\mu=g \rho$ for measurable $v \colon X \to \R^d$, $g \colon X \to \R$ and 
\[
B_\delta (\rho,m,\mu)= \int_{X} \Psi_\delta (1,v,g)  \, d\rho=\frac{1}{2} \int_{X} \left(|v|^2 + \delta^2 g^2 \right) \, d\rho \,.
\]
\end{enumerate}
\end{proposition}

\section{Time dependent Bochner spaces} \label{sec:hilbert setting}

In this section we construct a class of Bochner spaces of Hilbert spaces valued functions, where the Hilbert space can vary in time. Here the underlying measure space is the unit interval with the Lebesgue measure. This can however be easily generalized to arbitrary measure spaces. Moreover a generalization to Banach spaces valued functions seems possible, however, it is out of the scope of this paper.  More precisely,  we want to define a concept of integrability for functions
$f \colon [0,1] \to \{H_t\}_t$, where $H_t$ is a Hilbert space for each time $t$, and $f(t) \in H_t$ for all $t$. In order to do that we will closely follow the approach to define classic Bochner spaces (see \cite[Ch II]{diestel}, \cite[Ch 11]{aliprantis}). 
In Section \ref{sec:func_sett} we establish the functional analytic setting and assumptions under which we carry out the construction. In Section \ref{sec:meas} we define suitable notions of measurability and provide the equivalent of the classic Pettis measurability theorem (see Theorem \ref{thm:pettis}). Such result is instrumental to the following analysis, as it provides a practical characterization of strong measurability. In Section \ref{sec:int} we define integrability for maps $f \colon [0,1] \to H$ and characterize it in Theorem \ref{thm:int}. We then proceed to define the time dependent Bochner spaces $L^p([0,1];H)$. %
Notice that, in contrast to the classic Bochner theory, we will not define a notion of integral for integrable maps $f \colon [0,1] \to H$, but only of integrability. However, a comparison with the classical theory is possible, and it will be carried out in Appendix \ref{app:bochner:comparison}.

\subsection{Functional setting}   \label{sec:func_sett}
Let $\{H_t\}$ for $t \in [0,1]$ be a family of real Hilbert spaces with norms and scalar products denoted by $\norh{\cdot}$ and $\inner{\cdot}{\cdot}$, respectively. 
The interval $[0,1]$ is equipped with the Lebesgue measure. As usual, we denote by $|E|$ the measure of a set $E \subset [0,1]$ and by $\rchi_E$ its characteristic function, defined as $\rchi_E (t) := 1$ if $t \in E$ and $\rchi_E(t):=0$ otherwise. 
Let $H:=\cup_{t \in [0,1]} H_t$. We will denote by $f \colon [0,1] \to H$ maps such that $f(t) \in H_t$ for a.e.~$t \in [0,1]$. 
  Let $D$ be a real Banach space with norm denoted by $\nor{\cdot}_D$ and duality by $\inn{\cdot}{\cdot}_{D^*,D}$. Assume that for a.e.~$t \in [0,1]$ there exists a linear continuous operator $i_t \colon D \to H_t$ with the following properties: 
  \begin{enumerate}[label=\textnormal{(H\arabic*)}]
    \item	$\nor{i_t} \leq C$ for some constant $C>0$ not depending on $t$, \label{H1}
    \item $i_t (D)$ is dense in $H_t$, \label{H2}
    \item the map \label{H3}
  $
  t \mapsto \inner{i_t \f }{i_t \psi}
  $
is Lebesgue measurable for every fixed $\f,\psi \in D$. 
  \end{enumerate}
  The adjoint of $i_t$ is $i_t^* \colon H_t \to D^*$, defined by 
  $\inn{i_t^* h}{\f}_{D^*,D}:=\inner{h}{i_t \f}$ for all $h \in H_t, \f \in D$
   (here we identified $H_t$ with its dual).
  Notice that from \ref{H1}  it follows that $i_t^* \colon H_t \to D^*$ is linear continuous and such that $\nor{i_t^*} \leq C$.  Moreover from \ref{H2} we have that $i_t^*$ is injective. Throughout the section, we say that $g=f$ if the equality holds a.e.~in $[0,1]$. Moreover we say that $f_n \to f$ a.e.~if $\lim_n \norh{f_n(t) - f(t)}=0$ for a.e.~$t\in [0,1]$.

\subsection{Measurability in time dependent spaces} \label{sec:meas}

In this section we introduce suitable measurability notions for maps $f \colon [0,1]\to H$, and prove our version of Pettis' Theorem. We refer the reader to \cite[Ch II]{diestel} for classic measurability definitions. 

\begin{definition}[Step function] \label{def:step}
A map $f \colon [0,1] \to D$ is a \textit{step function} if  $f=\sum_{j=1}^N \rchi_{E_j} \f_j$ with $N \in \N$, $\f_j \in D$ and $E_j$ Lebesgue measurable pairwise disjoint subsets of $[0,1]$. 
\end{definition}

\begin{definition}[Measurability] \label{def:strong}
Let $f \colon [0,1] \to H$. We say that
\begin{enumerate}
\item[i)] $f$ is \textit{strongly measurable} if there exists a sequence of step functions $f_n$ such that
\[
\lim_n \norh{i_t f_n (t) - f(t)} = 0 \,\,\, \text{ for a.e. } \,\, t \in [0,1]\,,
\]
\item[ii)] $f$ is \textit{weakly measurable} if 
$t \mapsto \inner{i_t \f}{f(t)}$	
is Lebesgue measurable for each $\f \in D$,
\item[iii)]	 $f$ is \textit{essentially separably valued} if there exist a measurable set $E \subset [0,1]$ with $|E|=0$ and a countable subset $S \subset D$ with the following property: for every $\e >0$ and $t \in [0,1] \smallsetminus E$, there exists an element $\f \in S$ such that
	\[
	\norh{i_t \f - f(t)} < \e \,.
	\]  
\end{enumerate}
\end{definition}

Notice that, if $H_t = H$ for each $t \in [0,1]$, with $H$ fixed Hilbert space, $D = H$ and $i_t \f := \f$, then Definitions \ref{def:step} and \ref{def:strong} are equivalent to the classic ones given in Chapter II of \cite{diestel}.

\begin{remark} \label{rem1}
Let $p \geq 1$ and $f=\sum_{j=1}^N \rchi_{E_j} \f_j$ be a step function. Then the map $t \to \norh{i_t f(t)}^p$ is measurable and $\int_0^1 \norh{i_t f(t)}^p \, dt<\infty$. Indeed, 
$
\norh{i_t f(t)}^2 = \sum_{j=1}^N \rchi_{E_j}(t) \inner{i_t \f_j}{i_t \f_j}$,	
so that $t \mapsto \norh{i_t f(t)}^2$
is measurable by \ref{H3}, and hence also $t \mapsto \norh{i_t f(t)}^p$ is. Moreover by \ref{H1} 
\[
\int_0^1 \norh{i_t f(t)}^p \, dt = \sum_{j=1}^N \int_{E_j} \norh{i_t \f_j}^p \, dt\leq C^p \sum_{j=1}^N |E_j|   \nor{\f_j}_D^p < \infty \,.
\]
\end{remark}

\begin{remark} \label{rem:strong}
It is easy to check that strong measurability is stable under sums, scalar multiplication and pointwise a.e.~convergence.
Moreover if $f \colon [0,1] \to H$ is strongly measurable then the map $t \mapsto \norh{f(t)}$ is Lebesgue measurable, since $f$ can be approximated a.e. by step functions $f_n$ and $t \mapsto \norh{i_t f_n (t)}$ is measurable for every fixed $n$ by Remark \ref{rem1}.
\end{remark}

The above definitions are linked together by the analogous of the classic Pettis measurability Theorem (see \cite[Ch II.1, Thm 2]{diestel}).

\begin{theorem}[Pettis] \label{thm:pettis}
	Let $f \colon [0,1] \to H$. Then $f$ is strongly measurable if and only if $f$ is weakly measurable and essentially separably valued.
\end{theorem}
For a proof of this theorem, see Appendix \ref{app:bochner:auxiliary}.
By inspecting the proof, one can see that the following corollary holds.

\begin{corollary} \label{cor1}
Let $f \colon [0,1] \to H$. Then $f$ is strongly measurable if and only if it is the a.e.~uniform limit of a sequence of countably valued functions $f_n \colon [0,1] \to D$, that is, if
\[
\lim_n \norh{ i_t f_n (t) - f(t) } = 0     
\]	
uniformly for a.e.~$t \in [0,1]$, for some $f_n = \sum_{j=1}^\infty \rchi_{E_{n,j}} \f_{n_j} $ with $\f_{n_j} \in D$ and $\{E_{n,j}\}_{j \in \N}$ measurable and pairwise disjoint subsets of $[0,1]$.
\end{corollary}

\begin{proposition}[Separable case] \label{rem:sep} 
Assume that $D$ is separable. Then strong measurability is equivalent to weak measurability.	
\end{proposition}
\begin{proof}
Let $S=\{\f_n\} \subset D$ be countable and dense. Then $\{i_t \f_n\}$ is dense in $H_t$: indeed fix $\e >0$ and $h \in H_t$. By \ref{H2} there exists $\f \in D$ such that $\norh{h-i_t \f} < \e /2$. Let $\f_n$ be such that $\nor{\f - \f_n}_D < \e / 2C$ where $C$ is the constant in \ref{H1}. Then $\norh{h-i_t \f_n} < \e$ by triangle inequality and \ref{H1}. Therefore $H_t$ is separable and it is immediate to check that in this case every function $f \colon [0,1] \to H$ is essentially separably valued. By Theorem \ref{thm:pettis} the thesis follows.
\end{proof}

\subsection{\texorpdfstring{Integration and $L^p$ spaces}{Integration and Lp spaces}} \label{sec:int}

\begin{definition}[Integrability]\label{def:int}
Let $f \colon [0,1] \to H$ be strongly measurable according to Definition \ref{def:strong}. We say that $f$ is \textit{integrable} if there exists a sequence $\{f_n\}$ of step functions $f_n \colon [0,1] \to D$ such that
\begin{equation} \label{integrable}
\lim_n \int_0^1 \norh{i_t f_n(t) - f(t)} \,dt = 0 \,.
\end{equation}
\end{definition}

Notice that the definition is well posed, since the map $t \to \norh{i_t f_n(t) - f(t)}$ is Lebesgue measurable for each fixed $n$ (see Remark \ref{rem:strong}), hence its integral is well defined. Analogously to the classic case (\cite[Ch II.2, Thm 2]{diestel}), we can characterize integrability as stated in the following theorem.

\begin{theorem}[Characterization of integrability] \label{thm:int}
Let $f \colon [0,1] \to H$ be strongly measurable. Then 
 	$f$ is integrable if and only if 
 	$\int_0^1 \norh{f(t)} \, dt < \infty$.
\end{theorem}

\begin{proof}
Assume that $f$ is integrable. By \eqref{integrable}, for sufficiently large $N$ we have
\[
\int_0^1 \norh{f(t)} \, dt \leq \int_0^1 \norh{i_t f_N (t) - f(t)} \, dt + 
\int_0^1 \norh{i_t f_N (t)} \, dt < 1 + \int_0^1 \norh{i_t f_N (t)} \, dt
\]
and the thesis follows by Remark \ref{rem1}, since $f_N$ is a step function. Conversely, assume that $\int_0^1 \norh{f(t)} \, dt < \infty$. Since $f$ is strongly measurable, by Corollary \ref{cor1}
	there exists a sequence $\{g_n\}$ of countably valued maps $g_n \colon [0,1] \to D$ such that
$
\norh{ i_t g_n (t) - f(t) } < 1/n
$	
for a.e.~$t \in [0,1]$. In particular $\norh{ i_t g_n (t)} \leq  \norh{f(t) } + 1/n$, so that $\int_0^1 \norh{ i_t g_n (t) } \, dt < \infty$. By construction 
$
g_n(t)=\sum_{j=1}^{\infty} \rchi_{E_{n,j}} \f_{n,j} 
$
with sets $E_{n,j} \subset [0,1]$ measurable and pairwise disjoint. Hence there exists a sequence $\{k_n\}$ in $\N$ such that 
$\sum_{j=k_n +1}^{\infty} \int_{E_{n,j}} \norh{i_t g_n(t)} \, dt < n^{-1}$.
Therefore by setting $f_n(t) := \sum_{j=1}^{k_n}  \rchi_{E_{n,j}} \f_{n,j}$ we obtain
\[
\int_0^1 \norh{i_t f_n(t) - f(t)} \, dt \leq 
\int_0^1 \norh{i_t g_n(t) - f(t)}\, dt + \int_0^1 \norh{i_t f_n(t) - i_t g_n(t)} \, dt < \frac{2}{n}\,,
\]
proving that $f$ is integrable. 
\end{proof}

As a corollary of the above theorem, we obtain that a suitable version of Lebesgue's dominated convergence theorem holds in our setting. We postpone its proof to Appendix \ref{app:bochner:auxiliary}.

\begin{theorem}[Dominated convergence] \label{thm:dominated_convergence}
Let $f_n \colon [0,1] \to H$ be a sequence of integrable functions such that
$f_n \to f$ a.e.~in $[0,1]$ and that there exists $g \in L^1([0,1])$ satisfying
$\norh{f_n(t)} \leq g(t)$ a.e.~in $[0,1]$. Then $f$ is integrable and 
$f_n \to f$ strongly in $L^1([0,1];H)$.
\end{theorem}

\begin{definition}[$L^p$ space]
Fix $1 \leq p < \infty$. We define the space of the $p$-integrable functions
\begin{equation} \label{L2}
\Lp := \left\{ f \colon [0,1]  \to H \, \colon \,  
  f \, \text{ strongly measurable }, \, \int_0^1 \norh{f(t)}^p \, dt < \infty \right\} \,.
\end{equation}
\end{definition}

In \eqref{L2} we identify functions that coincide almost everywhere. 
Notice that \eqref{L2} is well posed: Indeed since $f$ is strongly measurable, then the map $t \mapsto \norh{f(t)}^p$ is measurable (Remark \ref{rem:strong}), and hence $\int_0^1 \norh{f(t)}^p \, dt $ is well defined (possibly infinite). 

\begin{remark}
If $H_t \equiv H$ with $H$ fixed Hilbert space and $D=H$, then $\Lp$ coincides with the respective classic Bochner spaces (see \cite[Ch II]{diestel}).  
\end{remark}

\begin{theorem} \label{thm:completeness}
	Let $1 \leq p < \infty$. We have that $\Lp$ is a Banach space with the norm $\nor{f}_{L^p}^p := \int_0^1 \norh{f(t)}^p \, dt$. %
Moreover $\Ltwo$ is a Hilbert space with the inner product  $\innerL{f}{g}:=\int_0^1 \inner{f(t)}{g(t)} \, dt$. 
\end{theorem}

The proof of the above theorem is postponed to Appendix~\ref{app:bochner:auxiliary}.

\begin{remark}[$p=\infty$]
It is is possible to treat the case $p=\infty$ by defining $L^\infty ([0,1];H)$ as the set of strongly measurable functions $f \colon [0,1]  \to H$ such that $\esssup_{t \in [0,1]} \norh{f(t)} < \infty$. By adapting the proof for classical Bochner spaces, one can show that $L^\infty ([0,1];H)$ is a Banach space with the norm $\nor{f}_{L^\infty}:= \esssup_{t \in [0,1]} \norh{f(t)}$.
\end{remark}

\begin{remark}[Dual spaces]
The space $\Ltwo$ is self-dual, being a Hilbert space. We believe that for any $p\geq 1$ one has the isometry $L^p([0,1];H)^*=L^q([0,1];H)$ with $1/p+1/q=1$. %
\end{remark}

\begin{example}[Narrowly continuous curves]
Let $\Om \subset \R^d$ be an open bounded domain with $d \in \N$, $d \geq 1$.
Let $\rho_t \in \pcurves$, $D:=C(\overline{\Om})$ with the supremum norm, $H_t:=L^2_{\rho_t}(\Om)$ with the norm $\norh{h}^2:=\int_\Om |h(x)|^2 \, d\rho_t (x)$. Define $i_t \colon D \to H_t$ by $i_t \f := \f$. 
It is left to the reader to check that \ref{H1}--\ref{H3} are satisfied.
Therefore we can define the space $L^p([0,1];H)$ for each $p \geq 1$. Notice that in this case $L^2([0,1];H)$ is isometric to $L^2_{\rho}([0,1] \times \olom)$, where $\rho:=dt \otimes \rho_t$. 
\end{example}

Finally, we would like to mention that, although we do not define a notion of integral for maps in $L^p([0,1];H)$, a comparison to the classical Bochner theory is still possible. Indeed, if $f \colon [0,1] \to H$, then by definition $i_t^*f \colon [0,1] \to D^*$ and the codomain is a fixed Banach space. In Appendix~\ref{app:bochner:comparison} we show that, assuming $f \in L^1([0,1];H)$, we always have that $i_t^* f$ is weakly* integrable (see Proposition \ref{prop:gelfand}). However, Bochner integrability fails in general, as shown in Example \ref{ex2}. Nevertheless, under suitable additional assumptions, one can show that Bochner integrability can be guaranteed (Proposition \ref{prop:bochner}).

\section{Regularization of dynamic inverse problems} \label{sec:regularization}

In this section we define and study the properties of the optimal transport based functional \eqref{intro J}, and we establish it as a regularizer for the dynamic inverse problem \eqref{intro inverse}. Throughout the section, the functional analytic setting will be the following. 
Let $\Om \subset \R^d$ be an open bounded domain, where $d \in \N$, $d \geq 1$, and define again the time space domain $X:=(0,1) \times \olom$. Let $\{H_t\}$ be a family of Hilbert spaces for $t \in [0,1]$, $D$ a Banach space and $i_t \colon D \to H_t$ linear operators which satisfy the assumptions \ref{H1}--\ref{H3} of Section \ref{sec:func_sett}. Assume given a family of linear continuous operators $K^*_t \colon \M (\overline{\Om}) \to H_t$ such that, for a.e.~$t \in [0,1]$, 
\begin{enumerate}[label=\textnormal{(K\arabic*)}]
\item $K^*_t$ is the adjoint of a linear continuous operator $K_t \colon H_t \to C(\overline{\Om})$,  \label{K1} 
\item[(K1')] 
\makeatletter\protected@edef\@currentlabel{(K1')}%
\makeatother%
$K_t^*$ is weak*-to-weak continuous,\label{K1bis}	
\item $\nor{K_t^*} \leq C$ for some constant $C>0$ that does not depend on $t$, \label{K2}
\item  \label{K3} the map $t \mapsto K^*_t \rho$ is strongly measurable in the sense of Definition \ref{def:strong}, for all $\rho \in \M(\olom)$. 
\end{enumerate}
We remark that conditions \ref{K1} and \ref{K1bis} are equivalent. 
As before, the space of narrowly continuous curves with values in the measures and in the positive measures are denoted by $\curves$ and $\pcurves$ respectively. The dynamic inverse problem we aim to regularize is the following: Given some data $f \in L^2([0,1];H)$, find a narrowly continuous curve $t \mapsto \rho_t$ such that
\begin{equation} \label{dyn inv}
K^*_t \rho_t = f_t \,,  \,\, \text{for a.e.} \,\, t \in [0,1].
\end{equation}
We regularize \eqref{dyn inv} as follows. Let $\M$ be defined by
\[
\M := \M (X) \times \M (X;\R^d) \times  \M (X) \,,
\]
and introduce the convex linear space of triples in $\M$ satisfying the continuity equation 
\[
\mathcal{D}:=\left\{ (\rho,m,\mu) \in \M \, \colon \, \de_t \rho+\Div m=\mu \, \text{ in the sense of } \, \eqref{cont weak} \right\}\,.
\]

\begin{definition}[Regularized problem]
	Let $f \in \Ltwo$ and $\alpha,\beta>0$, $\delta \in (0,\infty]$. The regularizer of \eqref{dyn inv} is the functional $J \colon \M \to [0,\infty]$ defined by
	\begin{equation} \label{def:J}
	J(\rho,m,\mu) :=
	 \frac{1}{2} \int_0^1 \norh{K^*_t \rho_t-f_t}^2 \, dt + \alpha B_\delta(\rho,m,\mu) + \beta  \nor{\rho}_{\M(X)}      
	\end{equation}
if $(\rho,m,\mu ) \in \mathcal{D}$ and $J=\infty$ otherwise. Here $\rho=dt \otimes \rho_t$ is the disintegration of $\rho$ with respect to time and $B_\delta$ is the transport energy defined in \eqref{bb}. 
\end{definition}

We will proceed as follows. First, in Section \ref{sec:reg} we show that the inverse problem in \eqref{dyn inv} and the functional $J$ in \eqref{def:J} are well defined, in the following sense: Given a triple $(\rho,m,\mu) \in \mathcal{D}$ with finite transport energy $B_\delta (\rho,m,\mu)$, then $\rho \geq 0$ and it disintegrates into $\rho=dt \otimes \rho_t$ with $t \mapsto \rho_t$ narrowly continuous. For such curves, we show in Lemma \ref{prop:good definition}, that $t \mapsto K_t^* \rho_t$ is a measurement belonging to $\Ltwo$, providing well-definition for \eqref{dyn inv}--\eqref{def:J}. In Section \ref{sec:existence} we show that
\begin{equation} \label{sec 4 min}
\inf_{(\rho,m,\mu) \in \mathcal{D}} J(\rho,m,\mu)
\end{equation}
admits at least one solution, and the minimizer is unique under additional assumptions on the operators $K_t^*$. This will be the content of Theorem \ref{thm:existence}. Finally, in Section \ref{sec:stability}, we investigate stability of the solutions to \eqref{sec 4 min} and convergence for vanishing noise level. %

\subsection{Well-definition} \label{sec:reg}

In this section we want to show that the definition of the functional $J$ at \eqref{def:J} is well posed. The first step is to ensure that the fidelity term is well defined, namely, that given $\rho_t \in \curves$ then the map $t \in [0,1] \mapsto K_t^* \rho_t \in H_t$ belongs to $\Ltwo$. This fact will be established in the following Lemma. 
 
\begin{lemma} \label{prop:good definition}
If $\rho_t \in \curves$ then $(t \mapsto K_t^* \rho_t) \in \Ltwo$.	
\end{lemma}

Let us postpone the proof for a moment. As a consequence of the Lemma we have the following.

\begin{proposition} \label{prop:well def}
The functional $J$ at \eqref{def:J} is well defined.
\end{proposition}

\begin{proof}[Proof of Lemma \ref{prop:good definition}]
\smallskip
\noindent \textbf{Part 1.}
Let $\rho_t \in \curves$. First we show that the map $t \mapsto K_t^* \rho_t$ is strongly measurable according to Definition \ref{def:strong}. We do so by means of Theorem \ref{thm:pettis}, by proving that $t \mapsto K_t^* \rho_t$ is weakly measurable and essentially separably valued. 

\smallskip
\noindent \textit{Claim 1:}
the map $t  \mapsto K_t^* \rho_t$ is weakly measurable as per Definition \ref{def:strong}, that is, $t \mapsto \inner{K_t^* \rho_t}{i_t \f}$ 	
is measurable for every fixed $\f \in D$.

\smallskip
\noindent \textit{Proof of Claim 1.}
By definition and \ref{K1} we have
$
\inner{K_t^* \rho_t}{i_t \f}=\inn{\rho_t}{K_t i_t \f}_{\M(\overline{\Om}),C(\overline{\Om})} \,.
$
Notice that the map $t \in [0,1] \mapsto K_t i_t \f \in C(\overline{\Om})$ is strongly measurable in the classic sense (\cite[Ch II]{diestel}). To see this, since $C(\overline{\Om})$ is separable, by the classic Pettis Theorem (\cite[Ch II.1, Thm 2]{diestel}), it is enough to prove that $t  \mapsto K_t i_t \f$ is weakly measurable, meaning that 
$
t \mapsto \inn{\rho}{K_t i_t \f}_{\M(\overline{\Om}),C(\overline{\Om})}
$  
is measurable for every fixed $\rho \in \M(\overline{\Om})$. The latter holds because 
$
\inn{\rho}{K_t i_t \f}_{\M(\overline{\Om}),C(\overline{\Om})} = \inner{K_t^* \rho}{i_t \f}
$ 
and the map $t \mapsto K^*_t \rho$ is strongly measurable by assumption \ref{K3}, and hence weakly measurable by Theorem \ref{thm:pettis}. By definition of classic strong measurability, there exists a sequence $\{f_n\}$ of step functions $f_n \colon [0,1] \to C(\overline{\Om})$, such that $f_n(t)=\sum_{j=1}^{N_n} \rchi_{E_{j,n}} f_{j,n}$ with $\{E_{j,n}\}_{j=1}^{N_n}$ measurable partition of $[0,1]$, $f_{j,n} \in C(\overline{\Om})$, and such that 
\begin{equation} \label{claim111}
\lim_{n \to \infty} \nor{f_n(t) - K_t i_t \f}_{C(\overline{\Om})} = 0 
\end{equation}
for a.e.~$t \in [0,1]$. We have that the map 
$t \mapsto \inn{\rho_t}{f_n(t)}_{\M(\overline{\Om}),C(\overline{\Om})}$
is measurable for each fixed $n \in \N$, since 
$
\inn{\rho_t}{f_n(t)}_{\M(\overline{\Om}),C(\overline{\Om})} = 
\sum_{j=1}^{N_n} \rchi_{E_{j,n}} \int_{\olom} f_{j,n} \, d\rho_t
$
and the maps $t \mapsto \int_{\olom} f_{j,n}  \, d\rho_t $ are continuous by narrow continuity of $t \mapsto \rho_t$. By Proposition \ref{prop:ub} we have that $\sup_{t \in [0,1]}\nor{\rho_t}_{\M (\overline{\Om})}< \infty$. Combining this with \eqref{claim111} yields
\[
|\inn{\rho_t}{f_n(t) - K_t i_t \f}_{\M(\overline{\Om}),C(\overline{\Om})} | \leq C \nor{f_n(t) - K_t i_t \f}_{C(\overline{\Om})} \to 0
\]
as $n \to \infty$, for a.e.~$t \in [0,1]$. Hence $t \mapsto \inn{\rho_t}{K_t i_t \f}_{\M(\overline{\Om}),C(\overline{\Om})}$ is measurable, being the a.e.~limit of measurable maps, and the claim follows.

\smallskip
\noindent \textit{Claim 2:}
the map $t  \mapsto K_t^* \rho_t$ is essentially separably valued, that is, there exists a measurable set $E \subset [0,1]$ such that $|E|=0$ and a countable set $S \subset D$ with the following property: for every $\e >0$ and $t \in [0,1] \smallsetminus E$ there exists $\f \in S$ such that $\norh{K_t^* \rho_t - i_t \f}<\e$.

\smallskip
\noindent \textit{Proof of Claim 2.} Let $T \subset [0,1]$ be a countable dense subset. Fix $t \in T$. By \ref{K3} the map $s \mapsto K_s^* \rho_t$ is strongly measurable and hence essentially separably valued by Theorem \ref{thm:pettis}. Therefore there exists a measurable set $E_t \subset [0,1]$ with $|E_t|=0$ and a countable subset $S_t \subset D$ with the following property: for every $\e >0$ and $s \in [0,1] \smallsetminus E_t$, there exists $\f \in S_t$ such that 
\begin{equation} \label{sep val3}
\nor{K_s^*\rho_t - i_s \f}_{H_s} < \e \,.	
\end{equation}
 Denote by $E:=\cup_{t \in T} E_t$. Since $T$ is countable, the set $E$ is measurable, and $|E|=0$. Moreover let 
 $S^0 := \cup_{t \in T} S_t$, so that $S^0 \subset D$ is countable. Define the set of averages of elements of $S^0$ as
 \[
 S:=\left\{ \frac{1}{k}\sum_{j=1}^k \f_j \, \colon \, k \geq 1, \, \f_j \in S^0 \right\} \,.
 \]
We have that $S \subset D$ is countable. Fix $\e >0$, $t \in [0,1] \smallsetminus E$. The claim follows by showing there exists $\f \in S$ such that
\begin{equation} \label{claim333}
\norh{K_t^* \rho_t - i_t \f} < \e	
\end{equation}
Indeed, by density, there exists a sequence $\{t_n\}$ in $T$ such that $t_n \to t$ as $n \to \infty$. Since $\rho_t \in \curves$ it follows that $\rho_{t_n} \weakstar \rho_t$ in $\M (\overline{\Om})$. By weak*-to-weak continuity of $K_t^*$ we have
$K_t^* \rho_{t_n} \weak K_t^* \rho_t$ weakly in $H_t$ 
as $n \to \infty$. By the Banach--Saks property in $H_t$ \cite[Ch VIII, Thm 1]{diestel2}, there exists a subsequence (not relabelled) such that
$
\frac{1}{n} \sum_{j=1}^n  K_t^* \rho_{t_j} \to K_t^* \rho_t$ strongly in $H_t$.
Hence we can choose $N \in \N$ such that 
\begin{equation} \label{st1}
\norh{ K_t^* \rho_t - \frac{1}{N} \sum_{j=1}^N  K_t^* \rho_{t_j}   } < \frac{\e}{2} \,.	
\end{equation}
Since $\{t_n\}$ is a sequence in $T$, by \eqref{sep val3} and the definitions of $S^0$ and $E$, we have that for every $n \in \N$ there exists $\f_n \in S^0$ such that 
\begin{equation} \label{st2}
\norh{K_t^* \rho_{t_n} - i_t \f_n} < \frac{\e}{2} \,.	
\end{equation}
 Define $\f :=\frac{1}{N} \sum_{j=1}^N \f_j$, so that $\f \in S$. By triangle inequality, linearity of $i_t$, and \eqref{st1}--\eqref{st2}, 
 \begin{align*}
 \norh{K_t^* \rho_t - i_t \f} & \leq \norh{K_t^* \rho_t - \frac{1}{N} \sum_{j=1}^N K_t^* \rho_{t_j}}     
 + \norh{ \frac{1}{N} \sum_{j=1}^N K_t^* \rho_{t_j} - i_t \f     }\\
 & \leq \frac{\e}{2}  + \frac{1}{N} \sum_{j=1}^N \norh{ K_t^* \rho_{t_j} - i_t \f_j  } < \e
 \end{align*}
which yields \eqref{claim333}.

\smallskip
\noindent \textbf{Part 2.}
Since $t \mapsto K_t^* \rho_t$ is strongly measurable, also $t \mapsto \norh{K_t^* \rho_t}$ is measurable. By \ref{K2} and Proposition \ref{prop:ub} we have
\[
\int_0^1  \norh{K_t^* \rho_t}^2 \, dt \leq C \int_0^1 \nor{\rho_t}^2_{\M(\overline{\Om})} \, dt \leq C \sup_t  \nor{\rho_t}^2_{\M(\overline{\Om})} = C {\left(\sup_t\nor{\rho_t}_{\M(\overline{\Om})} \right)}^2 < \infty \,.
\]
Hence by Theorem \ref{thm:int} we conclude that $K_t^*\rho_t$ is integrable and it belongs to $\Ltwo$.
\end{proof}

\begin{proof}[Proof of Proposition \ref{prop:well def}]
If $J(\rho,m,\mu) < \infty$ then also $B_\delta (\rho,m,\mu) < \infty$, hence by Proposition \ref{prop:bb prop} we have that $\rho \geq 0$, $m=v_t \rho$, $\mu = g_t \rho$ for Borel maps $v_t \colon X \to \R^d$, $g_t \colon X \to \R$ such that
\begin{equation} \label{b1}
\int_{X} (|v_t(x)|^2 + |g_t(x)|^2) \, d\rho(t,x) < \infty \,.
\end{equation}
By assumption $(\rho,m,\mu)$ solves the continuity equation, hence (Proposition \ref{prop:dis}) $\rho=dt \otimes \rho_t$ for some Borel family $\rho_t \in \M^+ (\overline{\Om})$. In  particular we have $m=dt \otimes (v_t \rho_t)$ and $\mu=dt \otimes (g_t \rho_t)$. By \eqref{b1} and Proposition \ref{prop:cont rep} we have that $\rho_t \in \pcurves$. Therefore $K_t^* \rho_t \in \Ltwo$ by Lemma \ref{prop:good definition}, and the first term in $J$ is finite.   
\end{proof}

\subsection{Existence of minimizers} \label{sec:existence}

The aim of this section is to prove that the functional $J$ defined in \eqref{def:J} admits at least one minimizer. Such minimizer is unique under suitable hypotheses on the operators $K_t^*$. The precise statement is the following. 

\begin{theorem} \label{thm:existence}
Let $f \in \Ltwo$ and $\alpha, \beta >0$, $\delta \in (0,\infty]$. Then there exists $(\rho^*,m^*,\mu^*) \in \mathcal{D}$ with $\rho^*=dt \otimes \rho_t^*$, $(t \mapsto \rho^*_t) \in \pcurves$, that solves the minimization problem 
\[
\inf_{(\rho,m,\mu) \in \M} J(\rho,m,\mu) \,.
\]	
If in addition $K_t^*$ is injective for a.e.~$t \in [0,1]$, then the minimizer is unique. 
\end{theorem}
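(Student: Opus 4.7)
The plan is to apply the direct method of the calculus of variations. I would take a minimizing sequence $(\rho^n,m^n,\mu^n) \in \mathcal{D}$; since $J(0,0,0) = \tfrac12 \|f\|_{L^2}^2 < \infty$ we may assume $J(\rho^n,m^n,\mu^n) \le C$. The penalty $\beta \|\rho\|_{\M}$ immediately bounds $\|\rho^n\|_{\M}$, and the bound on $B_\delta(\rho^n,m^n,\mu^n)$ combined with Proposition~\ref{prop:bb prop} lets me write $m^n = v^n \rho^n$, $\mu^n = g^n \rho^n$ with $\tfrac12 \int (|v^n|^2 + \delta^2 |g^n|^2)\,d\rho^n$ uniformly bounded; Cauchy--Schwarz then bounds $\|m^n\|_{\M}$ and $\|\mu^n\|_{\M}$ uniformly. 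Weak* compactness in $\M$ extracts a subsequence with $(\rho^n,m^n,\mu^n) \weakstar (\rho,m,\mu)$, and the constraint $\mathcal{D}$ survives the limit because~\eqref{cont weak} is linear with $C^\infty_c$ test functions. Weak* lower semicontinuity of $\alpha B_\delta + \beta \|\rho\|_{\M}$ is given by Proposition~\ref{prop:bb prop} together with the standard lsc of the total variation.

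The main obstacle is lower semicontinuity of the fidelity term: the weak* limit $\rho^n \weakstar \rho$ in $\M((0,1)\times\olom)$ is insufficient to invoke~\ref{K1bis}; I need \emph{pointwise-in-$t$} weak* convergence $\rho^n_t \weakstar \rho_t$ in $\M(\olom)$. The route I would follow is to replace each $\rho^n$ by its narrowly continuous representative (Proposition~\ref{prop:cont rep}), use the uniform energy bound and Lemma~\ref{lem:bound} to produce a uniform $\tfrac12$-Hölder modulus of continuity in the $C^1(\olom)^*$-norm together with a uniform pointwise mass bound, and then apply Proposition~\ref{ascoli} to extract a further subsequence with $\rho^n_t \weakstar \tilde\rho_t$ for every $t \in [0,1]$, with $\tilde\rho$ narrowly continuous. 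Testing $\rho$ against tensor products $a(t) b(x)$ with $a \in C^\infty_c((0,1))$, $b \in C(\olom)$ identifies $\rho = dt \otimes \tilde\rho_t$, so $\rho_t = \tilde\rho_t$. Hypothesis~\ref{K1bis} then yields $K_t^* \rho^n_t \weak K_t^* \rho_t$ in $H_t$ for every $t$; weak lsc of the Hilbert norm and Fatou's lemma (the integrands are nonnegative and uniformly bounded in $L^1$ by \ref{K2} and Proposition~\ref{prop:ub}) upgrade this to
\[
\int_0^1 \|K_t^* \rho_t - f_t\|_{H_t}^2 \, dt \le \liminf_n \int_0^1 \|K_t^* \rho^n_t - f_t\|_{H_t}^2 \, dt.
\]
Combining the three lower semicontinuities shows that $(\rho,m,\mu)$ attains the infimum; positivity and narrow continuity of the disintegration follow automatically from Proposition~\ref{prop:bb prop} and Proposition~\ref{prop:cont rep}.

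For uniqueness under injectivity of $K_t^*$, I would exploit convexity. Given two minimizers $(\rho_i,m_i,\mu_i)$, their midpoint is again a minimizer by convexity of $J$, and the parallelogram identity in $H_t$ applied to the fidelity forces $K_t^*(\rho_{1,t} - \rho_{2,t}) = 0$ for a.e.~$t$; injectivity then gives $\rho_1 = \rho_2 =: \rho$. With $\rho$ fixed, Proposition~\ref{prop:bb prop}(iv) lets me write $m_i = v_i \rho$, $\mu_i = g_i \rho$, and strict convexity of the integrand $(v,g) \mapsto \tfrac12(|v|^2 + \delta^2 g^2)$ on $L^2(\rho;\R^d \times \R)$ forces $(v_1,g_1) = (v_2,g_2)$ in $L^2(\rho)$, i.e., $m_1 = m_2$ and $\mu_1 = \mu_2$.
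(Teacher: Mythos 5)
Your proposal is correct and follows essentially the same route as the paper: the direct method combined with the continuous-representative, uniform $C^1(\olom)^*$-H\"older equicontinuity (Lemma \ref{lem:bound}) and refined Ascoli--Arzel\`a (Proposition \ref{ascoli}) machinery to upgrade weak* convergence in $\M((0,1)\times\olom)$ to pointwise-in-$t$ weak* convergence of the disintegrations, which is exactly what the paper packages into Lemmas \ref{lem:compactness} and \ref{lem:lower}. The only deviations are cosmetic: you use Fatou's lemma in place of the paper's dominated-convergence argument establishing weak convergence of $K_t^*\rho_t^n$ in $\Ltwo$, Cauchy--Schwarz instead of a duality estimate to bound $\nor{m^n}_{\M}$ and $\nor{\mu^n}_{\M}$, and a midpoint/parallelogram argument for uniqueness (with an explicit treatment of why $(m,\mu)$ is determined once $\rho$ is, a point the paper leaves implicit) in place of the paper's leading-coefficient strict-convexity computation.
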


The proof of the above theorem is based on the direct method of calculus of variations. Before proceeding to the proof, we will establish compactness and lower semicontinuity properties for the functional $J$. This is the object of the following two lemmas.

\begin{lemma}[Compactness for $J$] \label{lem:compactness}
	Let $f \in \Ltwo$ and $\alpha, \beta >0$, $\delta \in (0,\infty]$. Assume that there exists a constant $E \geq 0$ such that  the sequence $\{(\rho^n,m^n,\mu^n)\}$ in $\M$ satisfies
	\begin{equation} \label{hyp en bound}
	\sup_n J (\rho^n,m^n,\mu^n) \leq E \,.
	\end{equation}
 Then $\rho^n = dt \otimes \rho_t^n$ for some $(t \mapsto \rho_t^n) \in \pcurves$. Moreover there exists 
	$(\rho,m,\mu) \in \mathcal{D}$ with $\rho= dt \otimes \rho_t$, $(t \mapsto \rho_t) \in \pcurves$ such that, up to subsequences, 
	\begin{equation} \label{topology} \left\{
	\begin{gathered}
	(\rho^n,m^n,\mu^n) \weakstar (\rho,m,\mu) \,\, \text{ weakly* in } \,\, \M \,, \\
	\rho_t^n \weakstar \rho_t \,\, \text{ weakly* in }  \,\, \M(\olom)\,,\, \text{ for every } \,\, t \in [0,1]\,.   
	\end{gathered} \right.
	\end{equation}
\end{lemma}

\begin{proof}
	By the energy bound \eqref{hyp en bound}, there exists $\rho \in \M(X)$ such that, up to subsequences,
$\rho^n \weakstar \rho$ weakly* in $\M(X)$.
From \eqref{hyp en bound} we also have 
\begin{equation} \label{bound bb}
\sup_n B_\delta (\rho^n,m^n,\mu^n) \leq E \,,
\end{equation} 
so that Proposition \ref{prop:bb prop} implies $\rho^n \geq 0$, $m^n=v_t^n \rho^n$, $\mu^n = g_t^n \rho^n$ for Borel maps $v_t^n \colon X \to \R^d$, $g_t^n \colon X \to \R$ such that
\begin{equation} \label{b2}
\tilde{E}:= \sup_n \int_{X} (|v_t^n(x)|^2 + |g_t^n(x)|^2) \, d\rho^n(t,x) < \infty \,.
\end{equation}
By~\eqref{hyp en bound} we have $(\rho^n,m^n,\mu^n) \in \mathcal{D}$.  Hence Proposition \ref{prop:dis} implies that $\rho^n=dt \otimes \rho^n_t$ for some Borel family $\{\rho_t^n\}_{t \in [0,1]}$ in $\M^+ (\overline{\Om})$. In  particular we have $m^n=dt \otimes (v_t^n \rho_t^n)$ and $\mu=dt \otimes (g_t^n \rho_t^n)$. Hence by \eqref{b2} and Proposition \ref{prop:cont rep} we get that $(t \mapsto \rho_t^n) \in \pcurves$. Now notice that if $\rho \in \M^+(X)$, then by definition of $B_\delta$ (see \eqref{bb}) we infer
\begin{align*}
	\alpha B_\delta (\rho,m,\mu) + \beta \nor{\rho}_{\M (X)} & = \alpha \sup_{(a,b,c) \in C_0(X;K_\delta)} \int_{X}  a \, d\rho + \int_X b \cdot dm + \int_X c \, d\mu   + \int_{X} \beta \, d\rho \\
	& = \sup_{(a,b,c) \in C_0(X;\tilde{K}_\delta)} \int_{X}  a \, d\rho +\int_X b \cdot dm + \int_X c \, d\mu \,,
\end{align*}
where $K_\delta$ is defined in \eqref{keydelta} and 
\[
\tilde{K}_\delta := \left\{ (a,b,c) \in \R \times \R^d \times \R \, \colon \, a + \frac{1}{2} \left( |b|^2 + \frac{c^2}{\delta^2}\right) \leq \beta \right\} \,.
\]
Therefore, by taking $0<\e<\sqrt{2\beta}\min \{1,\delta\}$ we conclude
\begin{gather}
\e \nor{m}_{\M(X;\R^d)} = \sup_{\nor{b}_\infty \leq \e} \int_X b \cdot dm \leq  \alpha B_\delta (\rho,m,\mu) + \beta \nor{\rho}_{\M (X)} \leq J(\rho,m,\mu)\,, \nonumber \\
 \label{comp mu}
\e \nor{\mu}_{\M(X)} = \sup_{\nor{c}_{\infty} \leq \e} \int_X c\, d\mu  \leq \alpha B_\delta (\rho,m,\mu) + \beta \nor{\rho}_{\M (X)} \leq J(\rho,m,\mu)\,.
\end{gather}
By combining \eqref{hyp en bound} with the above estimates we conclude that (up to subsequences) $m^n \weakstar m$ and $\mu^n \weakstar \mu$ for some $m \in \M(X;\R^d), \mu \in \M(X)$. Since $\mathcal{D}$ is weak* closed, we get $(\rho,m,\mu) \in \mathcal{D}$. By Proposition \ref{prop:bb prop} the functional $B_\delta$ is weak* lower semicontinuous. Therefore \eqref{bound bb} implies $B_\delta (\rho,m,\mu)< \infty$ and by repeating the arguments above, we get that $\rho=dt \otimes \rho_t$, $m=dt \otimes (v_t \rho_t)$, $\mu = dt \otimes (g_t \rho_t)$ with $\rho_t \in \pcurves$ and
\[
\int_{X} (|v_t(x)|^2 + |g_t(x)|^2) \, d\rho(t,x) < \infty \,.
\]
In particular $(\rho,m,\mu) \in \mathcal{D}$. 
We will now show the second condition in \eqref{topology}.
Since $(\rho^n, m^n,\mu^n)$ solves the continuity equation, 
by Proposition \ref{prop:dis} we have that the map $t \mapsto \rho_t^n(\overline{\Om})$ belongs to $BV((0,1))$ with distributional derivative given by $\pi_\# \mu^n$, where we recall that $\pi \colon X \to (0,1)$ is the projection on the time coordinate. Therefore, by the embedding of $BV((0,1))$ into $L^\infty((0,1))$
\begin{equation} \label{bv est}
\begin{aligned}
\nor{\rho_t^n (\overline{\Om})}_{\infty} & \leq C
\nor{\rho_t^n (\overline{\Om})}_{\rm{BV}((0,1))} = C 
\int_0^1 \rho_t^n (\overline{\Om}) \, dt + C\nor{\de_t \rho_t^n(\overline{\Om})}_{\M((0,1))} \\
& = C \nor{\rho^n}_{\M(X)} + C \nor{\pi_\# \mu^n}_{\M((0,1))} 
\leq C \nor{\rho^n}_{\M(X)} + C \nor{ \mu^n}_{\M(X)} \leq C E\,,
\end{aligned}
\end{equation}
where we used \eqref{comp mu} and \eqref{hyp en bound}. 
Hence the set $\{\rho_t^n\}_{t,n}$ is uniformly bounded in $\M(\olom)$, so it belongs to some set $K \subset \M(\overline{\Om})$ which is weak* sequentially compact. Moreover as a consequence of Lemma \ref{lem:bound} we have that for every $t,s \in [0,1]$
\[
\nor{\rho_t^n - \rho_s^n}_{C^1(\olom)^*} \leq \sqrt{2C_n E_n} \, |t-s|^{1/2} \leq C  \, |t-s|^{1/2} \,,
\] 
where $E_n, C_n$ are the constants defined in the same lemma. The last inequality follows from \eqref{b2} and the fact that $\{\rho_t^n(\overline{\Om})\}_{t,n}$ is uniformly bounded, so that the constant $C>0$ does not depend on $n$. Hence by Proposition \ref{ascoli} there exists a subsequence (not relabelled) and a $C^1(\olom)^*$-continuous curve $\tilde{\rho}_t \colon [0,1] \to \M(\overline{\Om})$ such that
\begin{equation} \label{rho tilde}
\rho_t^n \weakstar \tilde{\rho}_t    \, \text{ weakly* in } \, \M(\overline{\Om})
\, \text{ for every } \, t \in [0,1] \,.
\end{equation}
In particular $\tilde{\rho}_t$ is narrowly continuous, since it is $C^1(\olom)^*$-continuous (this fact can be obtained by repeating the same argument given in the proof of Proposition \ref{prop:cont rep}). Notice that \eqref{rho tilde} implies that 
$\rho^n \weakstar \tilde{\rho}$ where $\tilde{\rho}:=dt \otimes \tilde{\rho}_t$. Hence $\tilde{\rho}=\rho$. By uniqueness of the disintegration we also get that $\rho_t = \tilde{\rho}_t$ and the thesis follows.
\end{proof}

\begin{lemma}[Lower semicontinuity for $J$] \label{lem:lower}
	Let $f \in \Ltwo$ and $\alpha, \beta >0$, $\delta \in (0,\infty]$. Assume that $(\rho^n,m^n,\mu^n) \in \mathcal{D}$ with $\rho^n=dt \otimes \rho^n_t$, $(t \mapsto \rho_t^n)  \in \pcurves$ is such that $(\rho^n,m^n,\mu^n)$ converges to $(\rho,m,\mu)$ in the sense of \eqref{topology}, where $\rho=dt \otimes \rho_t$, $(t \mapsto \rho_t)  \in \pcurves$.
	Then we have 
	\begin{equation} \label{weak cont}
	K_t^*\rho_t^n \weak K_t^*\rho_t \,\, \text{ weakly in } \,\, \Ltwo \,. 
	\end{equation}
	In particular $J$ is lower semicontinuous with respect to the convergence in \eqref{topology}, that is,
	\begin{equation} \label{lower thesis}
	J(\rho,m,\mu) \leq \liminf_n J(\rho^n,m^n,\mu^n) \,.
	\end{equation}
\end{lemma}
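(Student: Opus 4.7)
The plan is to reduce to a subsequence with uniformly bounded energies, establish the weak convergence in \eqref{weak cont} via pointwise weak convergence combined with a dominated convergence argument, and then conclude lower semicontinuity by decomposing $J$ into its three summands, each of which is weakly lower semicontinuous in the appropriate sense.

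First, if $\liminf_n J(\rho^n, m^n, \mu^n) = \infty$ then \eqref{lower thesis} is trivial, so I may assume this liminf is finite and extract a subsequence (not relabelled) along which $J(\rho^n, m^n, \mu^n)$ converges to $\liminf_n J(\rho^n, m^n, \mu^n)$ and is bounded by some $E \geq 0$. Arguing exactly as in the proof of Lemma \ref{lem:compactness}, the bound on $B_\delta(\rho^n,m^n,\mu^n)$ combined with \eqref{comp mu} yields $\sup_n \nor{\mu^n}_{\M((0,1)\times \olom)} < \infty$, and the $BV((0,1)) \hookrightarrow L^\infty((0,1))$ estimate carried out in \eqref{bv est} produces a constant $M > 0$ such that
\begin{equation*}
\sup_{n \in \N,\, t \in [0,1]} \nor{\rho_t^n}_{\M(\olom)} \leq M\,.
\end{equation*}

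Next, for each fixed $t \in [0,1]$, the assumed pointwise weak* convergence $\rho_t^n \weakstar \rho_t$ in $\M(\olom)$ together with \ref{K1bis} gives $K_t^* \rho_t^n \weak K_t^* \rho_t$ weakly in $H_t$. Hence for every $h \in \Ltwo$ and every $t \in [0,1]$ the scalar convergence $\inner{K_t^* \rho_t^n}{h(t)} \to \inner{K_t^* \rho_t}{h(t)}$ holds, while Cauchy--Schwarz together with \ref{K2} and the uniform bound above furnishes the domination
\begin{equation*}
|\inner{K_t^* \rho_t^n}{h(t)}| \leq \norh{K_t^* \rho_t^n}\norh{h(t)} \leq C M \norh{h(t)}\,,
\end{equation*}
with $t \mapsto \norh{h(t)}$ in $L^1([0,1])$ since $h \in \Ltwo$. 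Classical dominated convergence then yields $\innerL{K_t^* \rho_t^n}{h} \to \innerL{K_t^* \rho_t}{h}$ for every $h \in \Ltwo$, which is \eqref{weak cont}.

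To conclude \eqref{lower thesis} I would split $J$ into the fidelity term, the transport energy $\alpha B_\delta$, and the mass penalty $\beta \nor{\rho}_{\M((0,1) \times \olom)}$. By \eqref{weak cont}, $K_t^* \rho_t^n - f_t \weak K_t^* \rho_t - f_t$ weakly in the Hilbert space $\Ltwo$, and the weak lower semicontinuity of the squared norm gives the liminf inequality for the fidelity term. The transport energy $B_\delta$ is weak* lower semicontinuous by Proposition \ref{prop:bb prop}, and the total variation is weak* lower semicontinuous as a dual norm, so summing the three liminf inequalities along the extracted subsequence yields $J(\rho,m,\mu) \leq \liminf_n J(\rho^n,m^n,\mu^n)$. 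I expect the main obstacle to be the justification of \eqref{weak cont}: because the Hilbert spaces $H_t$ vary with $t$, there is no direct appeal to a standard continuity result for composition with a linear operator, and one must carefully combine the pointwise-in-$t$ weak*-to-weak continuity of $K_t^*$ with the uniform mass control guaranteed by the continuity equation in order to apply dominated convergence.
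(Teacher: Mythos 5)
Your argument follows the same route as the paper's: pointwise weak convergence $K_t^*\rho_t^n \weak K_t^*\rho_t$ in $H_t$ from \ref{K1bis}, a uniform bound on $\sup_{n,t}\nor{\rho_t^n}_{\M(\olom)}$ feeding a dominated convergence argument for \eqref{weak cont}, and then weak lower semicontinuity of the squared norm plus weak* lower semicontinuity of $B_\delta$ and of the total variation for \eqref{lower thesis}. There is, however, one genuine gap: you derive the uniform mass bound $M$ from an energy bound $\sup_n J(\rho^n,m^n,\mu^n)\le E$, obtained only after discarding the case $\liminf_n J(\rho^n,m^n,\mu^n)=\infty$ and passing to a subsequence. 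That is legitimate for \eqref{lower thesis}, but \eqref{weak cont} is a free-standing conclusion of the lemma, asserted for the whole sequence under the stated hypotheses alone, and it is invoked as such later (in the proofs of Theorems \ref{thm:stability} and \ref{thm:vanishing}). As written, your proof does not establish \eqref{weak cont} when the energies are unbounded, and it only gives it along a subsequence otherwise.

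The repair is short and is what the paper does: the convergence \eqref{topology} already includes $(\rho^n,m^n,\mu^n)\weakstar(\rho,m,\mu)$ in $\M$, so $\sup_n\nor{\rho^n}_{\M(X)}<\infty$ and $\sup_n\nor{\mu^n}_{\M(X)}<\infty$ by uniform boundedness of weak* convergent sequences; since $(\rho^n,m^n,\mu^n)\in\mathcal{D}$, the estimate \eqref{bv est} (i.e., $\nor{\rho_t^n(\olom)}_\infty \le C\nor{\rho^n}_{\M(X)}+C\nor{\pi_\#\mu^n}_{\M((0,1))}$ via the $BV\hookrightarrow L^\infty$ embedding) then yields $\sup_{n,t}\nor{\rho_t^n}_{\M(\olom)}<\infty$ with no reference to $J$ at all. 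With that substitution your dominated convergence argument proves \eqref{weak cont} for the full sequence, and the rest of your proof of \eqref{lower thesis} goes through unchanged (you should also say ``for a.e.~$t$'' rather than ``for every $t$'' when applying \ref{K1bis}, since that hypothesis is only assumed almost everywhere).
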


\begin{proof} Let us start by showing \eqref{weak cont}. To this end, fix $g \in \Ltwo$. By assumption we have that $\rho_t^n \weakstar \rho_t$ weakly* in $\M(\olom)$, for every $t \in [0,1]$. In particular, by \ref{K1bis}, we have
$K_t^* \rho_t^n \weak K_t^* \rho_t$ weakly in $H_t$ for a.e. $t \in [0,1]$,
so that
\begin{equation} \label{point leb}
 \inner{K_t^* \rho_t^n}{g(t)} \to  \inner{K_t^* \rho_t}{g(t)} \, \text{ for a.e. } \,  t \in [0,1] 
\end{equation}
as $n \to \infty$. By proceeding as in \eqref{bv est} we obtain
\[
\nor{\rho_t^n(\olom)}_{\infty} \leq C \nor{\rho^n}_{\M(X)} + C \nor{\mu^n}_{\M(X)} \leq C \,,
\]
for some constant $C \geq 0$, since $\rho^n$ and $\mu^n$ are uniformly bounded by weak* convergence in $\M(X)$. Hence by Cauchy--Schwarz and by \ref{K2} we have
\[
\left| \inner{K_t^* \rho_t^n}{g(t)} \right| \leq \norh{K_t^* \rho_t^n} \, \norh{g(t)} \leq C \rho_t^n(\olom) \, \norh{g(t)} \leq C  \norh{g(t)} \,. 
\]
Since $g \in \Ltwo$ we have that $t \mapsto  \norh{g(t)}$ belongs to $L^1((0,1))$. By combining the above estimate with \eqref{point leb} and invoking the classic dominated convergence theorem we conclude \eqref{weak cont}. 

Let us now prove the remaining part of the Lemma. From \eqref{weak cont} we have that
$K_t^* \rho_t^n - f_t$ converges weakly to $K_t^* \rho_t - f_t$  in $\Ltwo$,
therefore by lower semicontinuity of the norm with respect to the weak convergence we have
\[
\frac{1}{2} \nor{K_t^* \rho_t - f_t}_{L^2}^2 \leq 
\liminf_n \frac{1}{2} \nor{K_t^* \rho_t^n - f_t}_{L^2}^2 \,.
\]
Moreover $B_\delta$ is weak* lower semicontinuous by Proposition \ref{prop:bb prop}, thus 
\begin{align*}
J(\rho,m,\mu) & \leq \liminf_n \frac{1}{2} \nor{K_t^* \rho_t^n - f_t }_{L^2}^2   + \liminf_n \left( \alpha B_\delta (\rho^n,m^n,\mu^n) + \beta \nor{\rho^n}_{\M(X)}  \right)\\ 
& \leq \liminf_n J(\rho^n,m^n ,\mu^n) \,,
\end{align*}
and \eqref{lower thesis} follows. 
\end{proof}

\begin{proof}[Proof of Theorem \ref{thm:existence}]
\smallskip
\noindent \textit{Existence:}
Set $\rho:=dt \otimes \sigma$ with $\sigma \in \M^+(\overline{\Om})$. Then $(\rho,0,0) \in \mathcal{D}$, and $B_\delta(\rho,0,0)=0$ by (iv) in Proposition \ref{prop:bb prop}. Moreover $(t \mapsto K_t^* \sigma) \in \Ltwo$ by Lemma \ref{prop:good definition}, so that $J(\rho,0,0)< \infty$ and the infimum is finite. Let $(\rho^n,m^n,\mu^n)$ be a minimizing sequence, that is,
$J(\rho^n,m^n,\mu^n) \to \inf_{\M} J$
as $n \to \infty$. Therefore $\sup_n J(\rho^n,m^n,\mu^n) \leq E$
for some constant $E \geq 0$. From Lemma \ref{lem:compactness} we have $\rho^n = dt \otimes \rho_t^n$ and $\rho_t^n \in \pcurves$. Moreover there exists 
	$(\rho^*,m^*,\mu^*) \in \mathcal{D}$ with $\rho^*= dt \otimes \rho_t^*$, $(t \mapsto \rho_t^*) \in \pcurves$ such that, up to subsequences, $(\rho^n,m^n,\mu^n)$ converges to $(\rho^*,m^*,\mu^*)$ in the sense of \eqref{topology}. By Lemma \ref{lem:lower} and the fact that $(\rho^n,m^n,\mu^n)$ is a minimizing sequence we conclude that $(\rho^*,m^*,\mu^*)$ is a minimizer for $J$.

\smallskip

\noindent \textit{Uniqueness:} Assume that $K_t^*$ is injective for a.e.~$t \in [0,1]$. The term $B_\delta$ is convex by Proposition \ref{prop:bb prop}. Also the term $\nor{\cdot}_{\M (X)}$ is convex as it is a norm. Since minimizers are necessarily of the form $(\rho,m,\mu) \in \mathcal{D}$ with $\rho \geq 0$,  in order to prove uniqueness it is sufficient to show that 
\begin{equation} \label{conv map}
\rho_t \mapsto \nor{K_t^* \rho_t - f_t}^2_{L^2}
\end{equation}
is strictly convex for $\rho_t \in C_{\rm w}([0,1];\M^+(\overline{\Om}))$. First, consider $\rho_t  \in C_{\rm w}([0,1];\M^+(\overline{\Om}))$ such that $\rho_t \not\equiv 0$. As a consequence the set 
$E:=\{t \in [0,1] \, \colon \, \rho_t(\overline{\Om}) \neq 0 \}$
is open (by continuity of $t \mapsto \rho_t(\olom)$) and not empty, so $|E|>0$. Let
$F:= \left\{ t \in [0,1] \, \colon \, K_t^* \, \text{ is injective}\right\}$.
By assumption we have $|[0,1] \smallsetminus F|=0$. Therefore
\begin{equation} \label{non zero}
\nor{K_t^* \rho_t }^2_{L^2} = \int_0^1 \norh{K_t^* \rho_t}^2 \, dt \geq
\int_{E \cap F} \norh{K_t^* \rho_t}^2 \, dt > 0
\end{equation}
since $K_t^* \rho_t \neq 0$ for $t \in E \cap F$.  
Now let $\rho_t^1, \rho_t^2 \in C_{\rm w}([0,1];\M^+(\overline{\Om}))$ with $\rho^1_t \not\equiv \rho^2_t$ and $\lambda \in (0,1)$. The coefficient of the leading term of 
$
\lambda \mapsto \nor{ K_t^*( \lambda \rho^1_t +(1-\lambda)\rho^2_t ) }_{L^2}^2$ 
is $\nor{K_t^*(\rho_t^1-\rho_t^2) }_{L^2}^2$, 
which is non zero by \eqref{non zero}, since $\rho_t^1 \not\equiv \rho_t^2$. Hence the map in \eqref{conv map} is strictly convex and we conclude.
\end{proof}

\subsection{Regularization properties} \label{sec:stability}

In this section we denote by $f^\dagger \in \Ltwo$ the exact data and by $f^\gamma \in \Ltwo$ the noisy data for the noise level $\gamma>0$, that is, $\nor{f^\gamma - f^\dagger}_{L^2} \leq \gamma$. For a datum $f \in \Ltwo$ and parameters $\alpha,\beta>0$ we adopt the following notation: 
\[
J_{\alpha,\beta,f}(\rho,m,\mu):= \frac{1}{2} \nor{K^*_t \rho_t - f_t}_{L^2}^2 + \alpha B_\delta(\rho,m,\mu) + \beta \nor{\rho}_{\M(X)} \,
\]
if $(\rho,m,\mu) \in \mathcal{D}$ and $J_{\alpha,\beta,f}(\rho,m,\mu)= \infty$ otherwise.

\begin{theorem}[Stability] \label{thm:stability}
Assume that $f^n \to f^\gamma$ strongly in $\Ltwo$ and that 
\begin{equation} \label{argmin}
(\rho^n,m^n,\mu^n) \in \argmin_{(\rho,m,\mu) \in \mathcal{\M}} J_{\alpha,\beta,f^n} (\rho,m,\mu) \,.
\end{equation}
Then $(\rho^n,m^n,\mu^n) \in \mathcal{D}$ with $\rho^n=dt \otimes \rho_t^n$, $\rho_t^n \in \pcurves$. Moreover $(\rho^n,m^n,\mu^n)$ admits a subsequence converging in the sense of \eqref{topology}. The limit of each converging subsequence of $(\rho^n,m^n,\mu^n)$ is a minimizer of $J_{\alpha,\beta,f^\gamma}$. 	
\end{theorem}
\begin{proof}
A sequence $(\rho^n,m^n,\mu^n) \in \mathcal{D}$ satisfying \eqref{argmin} exists by Theorem \ref{thm:existence}. By the same theorem it also follows that $\rho^n=dt \otimes \rho_t^n$ with $\rho_t^n \in \pcurves$. 
We have
\[
J_{\alpha,\beta,f^\gamma} (\rho^n,m^n,\mu^n) \leq   
2 J_{\alpha,\beta,f^n} (\rho^n,m^n,\mu^n) +  \nor{f^\gamma_t - f^n_t}_{L^2}^2
\,.
\]
Since $(\rho^n,m^n,\mu^n)$ is a minimizer for $J_{\alpha,\beta,f^n}$ we can test \eqref{argmin} against $(0,0,0) \in \mathcal{D}$ to obtain
\[
\sup_n J_{\alpha,\beta,f^\gamma} (\rho^n,m^n,\mu^n) \leq  \nor{f_t^n}_{L^2}^2 + \nor{f_t^n - f^\gamma_t}^2_{L^2} < \infty \,,
\]
where last inequality follows from the convergence $f^n \to f^\gamma$ in $\Ltwo$. 
By applying Lemma \ref{lem:compactness} to $J_{\alpha,\beta,f^\gamma}$, there exists $(\tilde{\rho},\tilde{m}, \tilde{\mu}) \in \mathcal{D}$, with $\tilde{\rho}=dt \otimes \tilde{\rho}_t$, $\tilde{\rho}_t \in \pcurves$ such that
 $(\rho^n,m^n,\mu^n)$ converges to $(\tilde{\rho},\tilde{m}, \tilde{\mu})$ in the sense of \eqref{topology}. 
 We are left to show that $(\tilde{\rho},\tilde{m}, \tilde{\mu})$ is a minimizer for $J_{\alpha,\beta,f^\gamma}$. 
 Since $\rho_t^n \weakstar \tilde{\rho}_t$ for every $t \in [0,1]$, by Lemma \ref{lem:lower} and the convergence $f^n \to f^\gamma$ we have $(K_t^* \rho_t^n - f^n_t) \weak ( K_t^* \tilde{\rho}_t - f^\gamma_t)$ weakly in $\Ltwo$. Also by Lemma \ref{lem:lower},
\[
 J_{\alpha,\beta,f^\gamma} (\tilde{\rho},\tilde{m}, \tilde{\mu})  \leq
 \liminf_n J_{\alpha,\beta,f^n} (\rho^n,m^n,\mu^n)
 \leq \liminf_n J_{\alpha,\beta,f^n} (\rho,m,\mu) =   
 J_{\alpha,\beta,f^\gamma} (\rho,m,\mu)  
\]
 for every $(\rho,m,\mu) \in \mathcal{M}$, since \eqref{argmin} holds and $f^n \to f^\gamma$. Hence $(\tilde{\rho},\tilde{m}, \tilde{\mu})$ is a minimizer.
\end{proof}

We are now interested in studying properties of the minimizers of $J_{\alpha,\beta,f^\gamma}$ for vanishing noise level, that is, for data such that $\nor{f^\gamma - f^\dagger}_{L^2} \leq \gamma$ for every $\gamma \geq 0$. To this end, we need to understand how the regularization term
\[
\alpha B_\delta(\rho,m,\mu) + \beta \nor{\rho}_{\M(X)}
\]
behaves for fixed argument $(\rho,m,\mu) \in \mathcal{D}$. Since multiple parameters are involved, we will also allow $\alpha$ and $\beta$ to take the value $\infty$. We define 
\begin{equation} \label{beta inf}
\beta \nor{\cdot}_{\M(X)} := I_{\{0\}}  \,\, \text{ if } \,\, \beta = \infty,
\end{equation}
where $I_{\{0\}}$ denotes the convex indicator function of the set ${\{0\}}$. 
In order to give a similar definition for the case $\alpha=\infty$ we first need to characterize the subset of $\mathcal{D}$ where $B_\delta(\rho,m,\mu)=0$.

\begin{proposition} \label{prop:zero transport}
	Assume that $(\rho,m,\mu) \in \mathcal{D}$. We have that $B_\delta (\rho,m,\mu) = 0$ if and only if $m=0$, $\mu=0$ and $\rho = dt \otimes \sigma$ for some $\sigma \in \M^+(\olom)$. 
\end{proposition}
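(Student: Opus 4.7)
The plan is to handle the two implications separately. The sufficient direction is immediate: if $\rho = dt\otimes \sigma$ with $\sigma \in \M^+(\olom)$ and $m=\mu=0$, then $(\rho,0,0) \in \mathcal{D}$ follows by Fubini (every test function $\varphi \in C_c^\infty((0,1)\times\olom)$ integrates to zero in time against the constant-in-$t$ measure $\sigma$), and Proposition~\ref{prop:bb prop}(iv) applied with $v=g\equiv 0$ gives $B_\delta(\rho,0,0)=0$.

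For the necessary direction, I would argue in two steps. \textbf{Step 1:} Show $m=\mu=0$ and $\rho \geq 0$. By Proposition~\ref{prop:bb prop}(iii)--(iv), the finiteness (indeed vanishing) of $B_\delta(\rho,m,\mu)$ already forces $\rho\geq 0$, $m=v\rho$, $\mu=g\rho$ for Borel measurable $v,g$, together with
$$0 \;=\; B_\delta(\rho,m,\mu) \;=\; \frac12\int_{X}\bigl(|v|^2 + \delta^2 g^2\bigr)\,d\rho.$$
For $\delta \in (0,\infty)$ both $v$ and $g$ vanish $\rho$-a.e., so $m=\mu=0$. For $\delta=\infty$ the convention $\infty\cdot y^2 = \infty$ for $y\neq 0$ built into the definition of $\Psi_\infty$ forces $g=0$ $\rho$-a.e.\ already from the finiteness of $B_\infty$, while $v=0$ $\rho$-a.e.\ follows as before; again $m=\mu=0$.

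\textbf{Step 2:} Knowing $m=\mu=0$, the constraint $(\rho,0,0)\in\mathcal{D}$ reduces the continuity equation to $\partial_t\rho = 0$ in the distributional sense. Proposition~\ref{prop:dis} yields the disintegration $\rho = dt\otimes\rho_t$ with total mass $\rho_t(\olom)$ constant, and since the integrability conditions \eqref{cont int} are trivially met for $v_t=g_t=0$, Proposition~\ref{prop:cont rep} provides a narrowly continuous representative $\tilde{\rho}_t$ satisfying \eqref{cont boundary}. Testing that identity with the time-independent function $\varphi(t,x)=b(x)$, $b\in C^1(\olom)$, gives
$$\int_{\olom} b\,d\tilde{\rho}_{t_2} \;=\; \int_{\olom} b\,d\tilde{\rho}_{t_1} \qquad \text{for all } 0\leq t_1\leq t_2\leq 1.$$
Density of $C^1(\olom)$ in $C(\olom)$ combined with the uniform mass bound from Proposition~\ref{prop:ub} then extends this equality to every $b\in C(\olom)$. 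Hence $\tilde{\rho}_t \equiv \sigma$ for a single $\sigma \in \M^+(\olom)$, and $\rho = dt\otimes\sigma$ as claimed.

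The only mildly delicate point is the case $\delta=\infty$ in Step 1, where the conclusion $\mu=0$ must be read off from the convention built into $\Psi_\infty$ rather than from the vanishing of an integral of $g^2$; everything else is a direct application of the disintegration and continuity-equation machinery developed in Section~\ref{sec:optimal}.
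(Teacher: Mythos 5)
Your proof is correct and follows essentially the same route as the paper's: Proposition~\ref{prop:bb prop}(iii)--(iv) to get $m=\mu=0$, then Propositions~\ref{prop:dis} and \ref{prop:cont rep} together with formula~\eqref{cont boundary} tested against time-independent functions and a density argument to conclude $\rho_t\equiv\sigma$. Your explicit handling of the $\delta=\infty$ convention in Step~1 is a small refinement the paper leaves implicit, but it does not change the argument.
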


\begin{proof}
By Proposition \ref{prop:bb prop} point (iv) we have that $B_\delta(dt \otimes \sigma, 0,0)=0$ for any $\sigma \in \M^+(\olom)$. Conversely, assume that $(\rho,m,\mu) \in \mathcal{D}$ is such that $B_\delta (\rho,m,\mu) = 0$. In particular the energy is finite, so points (iii)--(iv) of Proposition \ref{prop:bb prop} imply that $\rho \geq 0$, $m=v \rho$, $\mu = g \rho$ for some Borel maps $v \colon X \to \R^d$, $g \colon X \to \R$, and we have
$
B_\delta (\rho,m,\mu) = \frac{1}{2} \int_X (|v|^2 + \delta^2 g^2) \, d\rho$. 	
Since $B_\delta (\rho,m,\mu)=0$ and $\rho \geq 0$, we conclude that $m=0$ and $\mu=0$. By assumption $(\rho,0,0)$ solves the continuity equation in the sense of \eqref{cont weak}, therefore Proposition \ref{prop:dis} guarantees that $\rho = dt \otimes \rho_t$ for some Borel family $\rho_t \in \M^+(\olom)$. 
Since $v=0$ and $g=0$ a.e.~in $X$, we can apply Proposition \ref{prop:cont rep} and conclude that $(t \mapsto \rho_t) \in \pcurves$. In particular, for every $0 \leq t_1 \leq t_2 \leq 1$ and $\f(t,x):=a(x)$ with $a \in C^1(\olom)$, formula \eqref{cont boundary} reads
$
\int_{\olom} a ( x) \, d\rho_{t_1} (x) =
\int_{\olom} a( x) \, d\rho_{t_2} (x)
$.
By a density argument one can show that the previous holds for all $a \in C(\olom)$, and hence $\rho_t=\rho_0$ for every $t \in [0,1]$. The thesis follows by setting $\sigma:=\rho_0$.
\end{proof}

Proposition \ref{prop:zero transport} motivates the following definition:
\begin{equation}\label{alpha inf}
\alpha B_\delta := I_Z  \,\, \text{ if } \,\, \alpha = \infty,
\end{equation}
where 
$
Z:=\left\{ (\rho,0,0) \in \mathcal{D} \, \colon \, \rho = dt \otimes \sigma\,, \,\, \sigma \in \M^+(\olom) \right\} \,.
$
We are now in the position to define minimal energy solutions of the inverse problem
\begin{equation} \label{inverse dagger 2}
K_t^* \rho_t = f_t^\dagger \, \text{ for a.e. } \, t \in [0,1] \,.
\end{equation}

\begin{definition}[Minimal energy solution]
	Let $f^\dagger \in \Ltwo$ and $\alpha^*,\beta^* \in [1, \infty]$, $\delta \in (0,\infty]$. We say that $(\rho^\dagger,m^\dagger,\mu^\dagger) \in \M$ is a \textit{minimal energy solution} of  \eqref{inverse dagger 2} with parameters $\alpha^*, \beta^*$ if
\[
	(\rho^\dagger,m^\dagger,\mu^\dagger) \in \argmin_{(\rho,m,\mu) \in \mathcal{D}} \left\{\alpha^* B_\delta (\rho,m,\mu) + \beta^* \nor{\rho}_{\M(X)} \, \colon \, K_t^* \rho_t = f_t^\dagger \,\, \text{ a.e.~in } \, [0,1] \right\} \,
\]
\end{definition}

In the following theorem we show that the minimizers for vanishing noise level converge in the sense of \eqref{topology} to an energy minimizing solution of the inverse problem \eqref{inverse dagger 2}.

\begin{theorem}[Convergence for vanishing noise level] \label{thm:vanishing}
Let $f^\dagger \in \Ltwo$ be the exact data and $\{f^n\}$ be a sequence of noisy data such that $\nor{f^n - f^\dagger}_{L^2} \leq \gamma_n$. Let $\alpha_n, \beta_n >0$ be such that
\begin{equation} \label{coeff}
\alpha_n, \beta_n \searrow 0 \quad \text{ and } \quad \frac{\gamma_n^2}{\alpha_n}, \frac{\gamma_n^2}{\beta_n} \to 0 \quad \text{ as } \quad n \to \infty \,.
\end{equation}
Let $c_n :=\min \{\alpha_n,\beta_n\}$, $\tilde{\alpha}_n:=\alpha_n/c_n$, $\tilde{\beta}_n:=\beta_n/c_n$ so that, up to subsequences,  $\tilde{\alpha}_n \to \alpha^*$ and $\tilde{\beta}_n \to \beta^*$ as $n \to \infty$, with $\alpha^*,\beta^* \in [1,\infty]$. 
Assume there exists $(\rho^\dagger,m^\dagger,\mu^\dagger) \in \mathcal{D}$ satisfying $\rho^\dagger =dt \otimes \rho^\dagger_t$, $\rho^\dagger_t \in \pcurves$, \eqref{inverse dagger 2} and
\begin{equation}
\alpha^* B_\delta(\rho^\dagger,m^\dagger,\mu^\dagger) + \beta^* \nor{\rho^\dagger}_{\M(X)} < \infty \,. \label{finite min norm} 
\end{equation}
Let $(\rho^n,m^n,\mu^n) \in \M$ be such that
\begin{equation} \label{argminw}
(\rho^n,m^n,\mu^n) \in \argmin_{(\rho,m,\mu) \in \mathcal{M}} J_{\alpha_n,\beta_n,f^n}(\rho,m,\mu) \,.
\end{equation}
Then $\rho^n=dt \otimes \rho_t^n$ with $\rho_t^n \in \pcurves$, and $(\rho^n,m^n,\mu^n)$ converges to $(\rho^*,m^*,\mu^*)$ in the sense of \eqref{topology}, up to subsequences. Moreover, every such weak* limit of $(\rho^n,m^n,\mu^n)$ is a minimal energy solution of \eqref{inverse dagger 2} with parameters $\alpha^*$ and $\beta^*$.
 \end{theorem}

\begin{proof}
First notice that $c_n \to 0$ and $\gamma^2_n/c_n \to 0$ as $n \to \infty$ by \eqref{coeff}. If $\tilde{\alpha}_n \to \infty$ or $\tilde{\beta}_n \to \infty$, we set $\alpha^*:=\infty$ and $\beta^*:=\infty$ respectively. If either of the sequences do not diverge to $\infty$, it is possible to find accumulation points $\alpha^*, \beta^* \in [1,\infty)$. In particular, up to extracting a subsequence, we can assume that $\tilde{\alpha}_n \to \alpha^*$ and $\tilde{\beta}_n \to \beta^*$ as $n \to \infty$.  A sequence $(\rho^n,m^n,\mu^n)$ satisfying \eqref{argminw} exists by Theorem \ref{thm:existence}. By the same theorem it also follows that $\rho^n=dt \otimes \rho_t^n$ with $\rho_t^n \in \pcurves$. 
By testing \eqref{argminw} against $(\rho^\dagger,m^\dagger,\mu^\dagger)$ and using \eqref{inverse dagger 2}--\eqref{coeff} we get
\begin{equation} \label{stab000}
J_{\alpha_n,\beta_n,f^n}(\rho^n,m^n,\mu^n) \leq 
\frac{\gamma_n^2}{2} + \alpha_n B_\delta (\rho^\dagger,m^\dagger,\mu^\dagger) + \beta_n \nor{\rho^\dagger}_{\M(X)}  \to 0\,.
\end{equation}
In particular $(K_t^* \rho_t^n - f_t^n) \to 0$ in $\Ltwo$. Since by assumption $f^n \to f^\dagger$, we obtain $K_t^* \rho^n_t \to f_t^\dagger$ in $\Ltwo$. Dividing the inequality at \eqref{stab000} by $c_n$, taking the limes superior and keeping \eqref{coeff} intro account yields
\begin{equation} \label{stab1}
\limsup_n \left(\tilde{\alpha}_n B_\delta (\rho^n,m^n,\mu^n) +  \tilde{\beta}_n \nor{\rho^n}_{\M(X)}  \right)\leq
 \alpha^* B_\delta (\rho^\dagger,m^\dagger,\mu^\dagger) + \beta^* \nor{\rho^\dagger}_{\M(X)} \,.
\end{equation}
Notice that the right hand side in \eqref{stab1} is always bounded, thanks to definitions \eqref{beta inf}, \eqref{alpha inf} and assumption \eqref{finite min norm}.
By definition we have $\tilde{\alpha}_n, \tilde{\beta}_n \geq 1$, thus
\[
\limsup_n  J_{1,1,f^\dagger} (\rho^n,m^n,\mu^n)  \leq %
 \limsup_n \left( \tilde{\alpha}_n B_\delta (\rho^n,m^n,\mu^n) +  \tilde{\beta}_n \nor{\rho^n}_{\M(X)}  \right) < \infty 
\]
where we used \eqref{stab1} and the convergence $K_t^* \rho^n_t \to f_t^\dagger$.  
Therefore an application of Lemma \ref{lem:compactness} guarantees the existence of $(\rho^*,m^*,\mu^*) \in \mathcal{D}$ with $\rho^*=dt \otimes \rho^*_t $, $\rho^*_t \in \pcurves$, such that, up to subsequences, $(\rho^n,m^n,\mu^n)$ converges to $(\rho^*,m^*,\mu^*)$ in the sense of \eqref{topology}. In particular by Lemma \ref{lem:lower} we have $K_t^* \rho^n_t \weak K_t^* \rho^*_t$ weakly in $\Ltwo$. Since we already proved that $K_t^* \rho^n_t \to f^\dagger_t$, by uniqueness of the weak limit we have 
\begin{equation} \label{stab3}
K_t^* \rho^*_t = f_t^\dagger  \,\, \text{ a.e.~in } \,\, [0,1] \,.
\end{equation}
We are left to show that $(\rho^*,m^*,\mu^*)$ is an energy minimizing solution of \eqref{inverse dagger 2}. By Lemma \ref{lem:lower}%
\begin{equation} \label{stab2}
\begin{aligned}
& \alpha^*  B_\delta  (\rho^*,m^*,\mu^*)  + \beta^* \nor{\rho^*}_{\M(X)}  \leq 
\liminf_n   \left( \tilde{\alpha}_n B_\delta (\rho^n,m^n,\mu^n) + \tilde{\beta}_n \nor{\rho^n}_{\M(X)} \right) \\
& \leq \limsup_n \left(  \tilde{\alpha}_n B_\delta (\rho^n,m^n,\mu^n) + \tilde{\beta}_n \nor{\rho^n}_{\M(X)} \right) 
 \leq  \alpha^* B_\delta (\rho^\dagger,m^\dagger,\mu^\dagger) + \beta^* \nor{\rho^\dagger}_{\M(X)} 
\end{aligned}
\end{equation}
where we used \eqref{stab1} and that $\tilde{\alpha}_n \to \alpha^*$, $\tilde{\beta}_n \to \beta^*$. Replacing $(\rho^\dagger,m^\dagger,\mu^\dagger)$ by an arbitrary solution of \eqref{inverse dagger 2} with finite energy $B_\delta$, the argument can be repeated, and from \eqref{stab3}--\eqref{stab2} we conclude that $(\rho^*,m^*,\mu^*)$ is an energy minimizing solution of \eqref{inverse dagger 2}.
\end{proof}

\section{Application to dynamic undersampled MRI} \label{sec:MRI}

We will now detail on the application of the above results to dynamic
magnetic resonance imaging as outlined in the introduction.
Let $\Om \subset \R^2$ be an open bounded domain representing the image frame, and $c_j \in C_0(\R^2;\C)$ for $j=1,\dots,N$ with $N \geq 1$ be the coil sensitivities. Let $\sigma_t \in \M^+ (\R^2)$ for $t \in [0,1]$ be a family of measures such that
\begin{enumerate}[label=\textnormal{(M\arabic*)}]
\item $\nor{\sigma_t}_{\M(\R^2)} \leq C$ for a.e.~$t \in [0,1]$, where $C >0$ does not depend on $t$, \label{M1}
\item the map $t \mapsto \int_{\R^2} \f(x) \, d\sigma_t(x)$ is measurable for each $\f \in C_0(\R^2;\C)$. 	\label{M2}
\end{enumerate}
Let $D:=C_0 (\R^2;\C^N)$ be the Banach space normed by $\nor{\f}_{\infty}:=\max_{\{j=1,\dots,N\}} \max_{x \in \R^2}|\f^j(x)|$, where $\f = (\f^1,\dots,\f^N)$. Define Hilbert spaces $H_t:=L^2_{\sigma_t}(\R^2;\C^N)$, equipped with the norm $\norh{h}^2:=\sum_{j=1}^N \int_{\R^2} |h^j(x)|^2 \, d\sigma_t(x)$, where  $h=(h^1,\dots,h^N)$. Define $i_t \colon D \to H_t$ as the identity map, acting component-wise. Note that here we are interpreting $D$ and $H_t$ as real vector spaces.  
For a measure $\rho \in \M(\olom;\C)$ we denote its Fourier transform as 
\[
\F \rho (x):= \frac{1}{2\pi} \int_{\R^2} e^{-i \omega \cdot x } \, d\rho(\omega) \,,
\]
so that $\F \rho \in C(\R^2;\C)$. Notice that in the above definition we extend $\rho$ to be zero outside of $\olom$. 
For each $t \in [0,1]$, define the linear operator $K_t^* \colon \M(\olom) \to H_t$ as 
\begin{equation} \label{K MRI}
K_t^* \rho := (\F(c_1\rho), \dots, \F(c_N\rho)) \,.
\end{equation}
In the MRI context, the family of measures $\rho_t \in \M^+(\olom)$ for $t \in [0,1]$ represents the proton density at each time step. Given some data $f \in \Ltwo$, we want to reconstruct a solution to the dynamic inverse problem
\begin{equation} \label{dyn MRI inv}
K_t^* \rho_t = f_t \,\, \text{ for a.e. } \,\, t \in [0,1] \,.
\end{equation}
As proposed in the previous sections, we relax the problem to measures $\rho \in \M(X)$, with $X:=(0,1) \times \olom$, and minimize the functional $J$ introduced in \eqref{def:J}. Under the assumptions \ref{M1}--\ref{M2} the functional $J$ admits at least one minimizer, and minimizers are unique under suitable additional assumptions. This claim is the object of the following theorem.

\begin{theorem} \label{thm:MRI}
Let $\alpha,\beta >0$, $\delta \in (0,\infty]$, $f \in \Ltwo$. Let $\{\sigma_t\}$ for $t \in [0,1]$ be a family of Radon measures in $\M^+ (\R^2)$ satisfying \ref{M1}--\ref{M2}. Let $c_1,\dots,c_N \in C_0(\R^2;\C)$ be coil sensitivities. Then the regularization of \eqref{dyn MRI inv} according to
\[
\inf_{(\rho,m,\mu) \in \mathcal{D}} \frac{1}{2} \sum_{j=1}^N \int_0^1 \nor{\F(c_j \rho_t)-f_t^j}_{L^2_{\sigma_t}(\R^2;\C)}^2   \, dt  + 
\alpha B_\delta (\rho,m,\mu) + \beta \nor{\rho}_{\M(X)}
\]	
admits a solution $(\rho^*,m^*,\mu^*) \in \mathcal{D}$ with $\rho^*=dt \otimes \rho_t^*$ and the curve $t \mapsto \rho_t^*$ belonging to $\pcurves$. If in addition the supports of the measures $\sigma_t$ have non empty interior for a.e.~$t \in [0,1]$, and the vector of coil sensitivities satisfies $c(x) \neq 0$ for every $x \in \olom$, 
then the minimizer is unique.    
\end{theorem}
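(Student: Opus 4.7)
The plan is to deduce the conclusion directly from Theorem \ref{thm:existence}, once the MRI setup has been verified to satisfy the abstract hypotheses. Propositions \ref{MRI H} and \ref{MRI K} already establish that the family $\{H_t\}$, the dense subspace $D = C_0(\R^2;\C^N)$, the operators $i_t$ and the forward operators $K_t^*$ defined in \eqref{K MRI} satisfy \ref{H1}--\ref{H3} and \ref{K1}--\ref{K3}. Existence of a minimizer $(\rho^*,m^*,\mu^*) \in \mathcal{D}$, together with the disintegration $\rho^* = dt \otimes \rho_t^*$ and narrow continuity of $t \mapsto \rho_t^*$, then follows at once from the first part of Theorem \ref{thm:existence}.

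For the uniqueness statement, by the second part of Theorem \ref{thm:existence} it suffices to prove that the additional assumptions (namely, $\supp \sigma_t$ has non-empty interior for a.e.~$t$ and $c(x) \neq 0$ for every $x \in \olom$) imply that $K_t^* \colon \M(\olom) \to H_t$ is injective for a.e.~$t \in [0,1]$. So I fix a representative $t$ for which $\supp \sigma_t$ has non-empty interior and assume $K_t^* \rho = 0$, meaning that $\F(c_j \rho) = 0$ in $L^2_{\sigma_t}(\R^2;\C)$ for every $j = 1, \dots, N$. The goal is to conclude $\rho = 0$.

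The main obstacle, and the key analytic input, is the following rigidity of Fourier transforms of compactly supported measures: each $c_j \rho$ belongs to $\M(\olom;\C)$ and is therefore compactly supported in $\R^2$, so its Fourier transform $\F(c_j \rho)$ extends to an entire function on $\C^2$ of exponential type (a Paley--Wiener type argument), and in particular is real-analytic on $\R^2$. Since $\F(c_j \rho)$ is continuous and vanishes $\sigma_t$-almost everywhere, it vanishes on $\supp \sigma_t$; as $\supp \sigma_t$ has non-empty interior, $\F(c_j \rho)$ vanishes on a non-empty open subset of $\R^2$, and by real-analyticity it vanishes identically on $\R^2$. Fourier inversion (for finite complex measures on $\R^2$) then yields $c_j \rho = 0$ as an element of $\M(\R^2;\C)$, for every $j$.

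It remains to derive $\rho = 0$ from the conditions $c_j \rho = 0$ for all $j$ together with $c(x) \neq 0$ for every $x \in \olom$. For any $x \in \olom$ there is some index $j$ with $c_j(x) \neq 0$, and by continuity of $c_j$ there is an open neighborhood $U_x \subset \R^2$ on which $c_j$ is bounded away from zero; the identity $c_j \rho = 0$ then forces $|\rho|(U_x) = 0$. Covering the compact set $\olom$ by finitely many such neighborhoods gives $|\rho|(\olom) = 0$, i.e.~$\rho = 0$. This proves injectivity of $K_t^*$ for a.e.~$t$, and Theorem \ref{thm:existence} delivers uniqueness of the minimizer.
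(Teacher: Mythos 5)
Your proposal is correct and follows essentially the same route as the paper: existence via Propositions \ref{MRI H}, \ref{MRI K} and Theorem \ref{thm:existence}, and uniqueness by showing injectivity of $K_t^*$ through analyticity of $\F(c_j\rho)$ on the open interior of $\supp\sigma_t$, Fourier inversion, and the nonvanishing of $c$. You merely spell out a few steps the paper leaves implicit (passing from $\sigma_t$-a.e.\ vanishing to vanishing on $\supp\sigma_t$, and the covering argument deducing $\rho=0$ from $c_j\rho=0$), which is fine.
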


Before proceeding with the proof, we want to show how this analytical  framework allows us to treat a wide variety of sampling patterns. We will give two examples.

\begin{example}[Continuous sampling] Let $\Om:=(-\frac12,\frac12)^2$ %
and for $t \in [0,1]$ define the line $L_t:=(-\frac12,\frac12) \times \{ t-\frac12\}$. Set $\sigma_t:=\mathcal{H}^1 \zak L_t$, that is, the restriction of the $1$-dimensional Hausdorff measure to the lines $L_t$. It is immediate to check that $\sigma_t \in \M^+(\R^2)$ satisfies \ref{M1}--\ref{M2}: indeed $\nor{\sigma_t}_{\M^+(\R^2)} = \mathcal{H}^1(L_t)= 1$, while the map $t \mapsto \int_{\R^2} \f(x) d\sigma_t(x)$ is continuous for $\f$ in $C_0(\R^2;\C)$. In the same way we can treat radial sampling, by setting $L_t$ to be a collection of diameters through the origin, evolving in time (see Example~\ref{ex2}).%
\end{example}

\begin{example}[Compressed-sensing sampling]
In this example we propose to sample along a finite collection of moving points in an open bounded domain $\Om \subset\R^2$. To be more specific, fix $M \in \N$, $M \geq 1$ and for every $j =1,\dots,M$ let $t \in [0,1] \mapsto x_t^j \in \Om$ be a measurable curve. %
For a.e.~$t \in [0,1]$ define $P_t:=\{ x_t^1,\dots,x_t^M\}$ and $\sigma_t:=\mathcal{H}^0 \zak P_t = \sum_{j=1}^M \delta_{x_t^j}$. Notice that \ref{M1} is satisfied since $\nor{\sigma_t}_{\M(\R^2)} =M$. Given a map $\f \in C_0(\R^2;\C)$ we have that $t \mapsto \int_{\R^2} \f (x) \, d\sigma_t(x) = \sum_{j=1}^M \f (x_t^j)$   
is measurable by construction. Therefore also $\ref{M2}$ is satisfied. 
\end{example}

We now want to prove Theorem \ref{thm:MRI}. Before that,
we need a preliminary lemma, stating that under \ref{M1}--\ref{M2} the above definitions of $D, H_t, i_t, K_t^*$ satisfy assumptions \ref{H1}--\ref{H3} and \ref{K1}--\ref{K3}.

\begin{lemma} \label{MRI H}
Assume that \ref{M1}--\ref{M2} hold. The spaces $D$, $H_t$ and the operators $i_t$ satisfy \ref{H1}--\ref{H3}.	
Moreover the operators $K_t^*$ in \eqref{K MRI} satisfy \ref{K1}--\ref{K3}.
\end{lemma}

\begin{proof}
Notice that $i_t$ is linear and continuous, with $\nor{i_t}^2 \leq N\nor{\sigma_t}_{\M(\R^2)}$. In particular \ref{H1} follows from \ref{M1}. Moreover \ref{H2} is trivially satisfied. Finally for $\f,\psi \in D$ we have
\[
t \mapsto \inner{i_t \f}{i_t \psi} = \Re \left( \sum_{j=1}^N \int_{\R^2} \f^j (x) \overline{\psi^j(x)} \, d\sigma_t (x) \right) \,,
\]  
which is measurable by \ref{M2}, as it is the real part of a sum of measurable maps. Hence \ref{H3} is also satisfied. 
Let us now show that \ref{K1}--\ref{K3} hold. For $\rho \in \M(\olom)$ we have
\begin{equation} \label{bound fourier}
| \F (c_j \rho)(x)  | = \frac{1}{2\pi} \left|   \int_{\R^2} e^{-i \omega \cdot x} c_j(\omega) \, d\rho(\omega) \right| \leq 
\frac{\nor{c_j}_{\infty}}{2\pi} \, \nor{\rho}_{\M(\olom)} \,.    
\end{equation}
Hence, each $ \F (c_j \rho)$ is square integrable with respect to $\sigma_t$, so that $K_t^*$ maps $\M(\olom)$ into $H_t$. Moreover, by the above estimate we also have
\[
\norh{K_t^* \rho}^2 = \sum_{j=1}^N \int_{\R^2} | \F (c_j \rho)(x)|^2 \, d\sigma_t (x) \leq \frac{N}{4 \pi^2} \nor{c}^2_{\infty} \nor{\rho}_{\M(\olom)}^2 \nor{\sigma_t}_{\M(\R^2)} \,,
\]
where $c:=(c_1,\dots,c_N)$ is the vector of coil sensitivities. Therefore $K^*_t$ is continuous, with
$\nor{K^*_t}^2 \leq\frac{N}{4 \pi^2} \nor{c}_{\infty}^2 \nor{\sigma_t}_{\M(\R^2)}$
and \ref{K2} is satisfied because of assumption \ref{M1}. Let us show that $K_t^*$ is weak*-to-weak continuous. To this end, let $\{\rho_n\}$ in $\M(\olom)$ be such that $\rho_n \weakstar \rho$. Since $\rho_n,\rho$ are supported in the compact set $\olom$, it follows that 
\begin{equation} \label{dct 1}
\F(c_j\rho_n) \to \F(c_j \rho) \,\, \text{ pointwise in } \,\, \R^2 \,,
\end{equation}
as $n \to \infty$. 
Moreover, by weak* convergence we have $\sup_n \nor{\rho_n}_{\M(\olom)}< \infty$. As a consequence of \eqref{bound fourier}, there exists a constant $C \geq 0$ such that
 \begin{equation} \label{dct 2}
 \sup_n \nor{\F (c_j \rho_n)}_{\infty} \leq C  \,\, \text{ for every } \,\, j=1, \dots, N \,.
 \end{equation}
By invoking the dominated convergence theorem in conjunction with \eqref{dct 1}--\eqref{dct 2} we conclude that $K_t^* \rho_n \weak K_t^* \rho$ weakly in $L^2_{\sigma_t}(\R^2;\C^N)$. Hence \ref{K1bis} is satisfied and, as a consequence, $K_t^*$ is the adjoint of some linear continuous operator $K_t \colon H_t \to C(\olom)$. Finally we need to show \ref{K3}: that the map $t \mapsto K_t^* \rho$ is strongly measurable according to Definition \ref{def:strong}, for every fixed $\rho \in \M(\olom)$. Notice that the space $D=C_0(\R^2;\C^N)$ is separable, hence by Proposition \ref{rem:sep} it is sufficient to show that $t \mapsto K_t^* \rho$ is weakly measurable according to Definition \ref{def:strong}. %
However, since $\F (c_j\rho) \in C(\R^2;\C)$ is bounded (see \eqref{bound fourier}) and maps in $D$ are continuous, %
this is an immediate consequence of \ref{M2}. 
\end{proof}

\begin{proof}[Proof of Theorem \ref{thm:MRI}]
The existence of a minimizer follows from Proposition \ref{MRI H} and of Theorem \ref{thm:existence}. For the uniqueness, from Theorem \ref{thm:existence}, it is sufficient to check that the operators $K_t^* \colon \M(\olom) \to H_t$ are injective for a.e.~$t \in [0,1]$. To this end, choose $t \in [0,1]$ such that $\supp \sigma_t$ has non empty interior, and let $\rho \in \M(\olom)$ be such that $K_t^* \rho = 0$. In particular $\F(c_j \rho) = 0$ in $\supp \sigma_t$, for every $j = 1, \dots, N$. Since $\F(c_j \rho)$ is analytic and $\supp \sigma_t$ contains an open ball, we conclude that $\F(c_j \rho) = 0$ in $\R^2$. By injectivity of the Fourier transform we have that $c_j \rho = 0$, and since we are assuming that $c(x) \neq 0$ for every $x \in \olom$, we conclude that $\rho =0$.
\end{proof}

\section{Conclusions and perspectives}

\label{sec:conclusions}

\medskip

In the paper, we have shown that it is possible to successfully use energy
functionals that are associated with a dynamic formulation of optimal
transport as regularization functionals for dynamic
inverse problems that aim at the recovery of measure-valued curves.
Let us point out some future directions of research. On the one hand,
the focus of the paper is on regularizers that penalize mass transport
by the squared distance as well as mass growth in terms of quadratic
costs for growth rate. Thus, a generalization to other convex optimal
transport energies (such as, e.g., the $p$-th power of the euclidean
distance) in an appropriate dynamic context (i.e., where the dynamic
formulation involves a continuity type equation), would be interesting
and seems to be possible. On the other hand, the regularized problems
involve, in addition to the transport energy, a Radon-norm term which
corresponds to a penalization of the total mass. Also here, a
generalization to other regularization functionals should be possible,
provided that one can still ensure boundedness of the total
mass. This way, it might be possible to impose, e.g., spatial
smoothness of the solution curve.

Finally, we would like to mention that a numerical optimization algorithm for the solution of the regularized problem is currently under preparation, in the general setting and also with focus on the application to dynamic MRI. The numerical solution to the dynamic optimal transport problem outside the inverse problems context is addressed for example in \cite{bb,chizat,ppo}, where the authors employ proximal splitting algorithms, after a careful discretization of the continuity equation constraint by means of staggered grids, and, subsequently, of the energy. The recent application to dynamic PET imaging \cite{schmitzer} also builds on these algorithms. The method we propose, based on Frank-Wolfe-type algorithms \cite{bredies-lorenz,duval-peyre,frankwolfe}, is inherently discretization-free and therefore genuinely cast in the space of Radon measures, in the spirit of \cite{bp,walter}. Generally, such %
algorithms are attractive for obtaining sparse solutions for inverse problems in terms of extremal points of the regularizer \cite{chambolle,bredies-carioni}.
In our setting,
the idea is to linearize the fidelity term in \eqref{eq:intro_tikh} around some initial guess, and then proceed to minimize (a suitably modified version) of the functional obtained. The key part of the analysis is that such linearized problem admits a solution which is an extremal point of the unit ball of the regularizer, making the minimization problem numerically accessible, although non-convex in general. Indeed, as shown in \cite{extremal}, the extremal points of the Benamou-Brenier regularizer are given by measures concentrated on curves (with a certain regularity), yielding an optimization problem in some Sobolev space. It is then possible to show convergence of the algorithm in the space of measures. Such an approach is then particularly well-suited to recover sparse solutions, given by travelling Dirac deltas with the addition of noise, making it attractive for medical applications in which the tracking of point-sources is relevant. %
With this approach one can, in principle, deal also with the unbalanced optimal transport case. The key ingredient is of course the characterization of the extremal points of the Wasserstein--Fisher--Rao energy, which is currently under preparation by the authors, and bases on a measure-theoretic superposition principle for the non-homogeneous continuity equation, which is in itself a novel result.

\section*{Acknowledgments}

The authors gratefully acknowledge support by the Christian Doppler
Research Association (CDG) and Austrian Science Fund (FWF) through the
Partnership in Research project PIR-27 ``Mathematical methods for
motion-aware medical imaging''. The Institute of Mathematics and
Scientific Computing, to which the authors are affiliated, is a member
of NAWI Graz (\url{http://www.nawigraz.at/en/}). The authors are further
members of/associated with BioTechMed Graz
(\url{https://biotechmedgraz.at/en/}).

\bibliography{bibliography}

\bibliographystyle{my_plain}

\appendix

\section{Measure theory} \label{app:measure}

\subsection{Measure theory preliminaries} \label{app:measure:prel}

In this paper we follow the definitions and notations of \cite{afp}. In particular, scalar or vectorial measures will always be defined on the Borel $\sigma$-algebra $\B(X)$ of some locally compact, separable metric space $X$. Given a measure $\mu$, we denote with $|\mu|$ its \textit{total variation}. We always assume that $|\mu|$ is at least locally finite. The set of $\R^m$-valued measures for which $|\mu|(X)<\infty$ is denoted by $\M(X;\R^m)$ and $\M(X):=\M(X;\R)$, while the set of positive finite measures is denoted by $\M^+(X)$.

\subsubsection{Absolute continuity, support and restriction}
Given $\mu \in \M(X)$ and $\nu \in \M(X;\R^m)$, we say that $\nu$ is \textit{absolutely continuous} with respect to $\mu$, in symbols $\nu \ll \mu$, if $|\nu|(E)=0$ whenever $\mu(E)=0$, $E \in \B(X)$. If $\mu$ and $\sigma$ are real or vector valued measures, we say that they are mutually singular, $\mu \perp \nu$, if there exists a set $E \in \B(X)$ such that $|\mu|(E)=0$ and $|\nu|(X \smallsetminus E)=0$.  For a measure $\mu$ its \textit{support}, denoted by $\supp \mu$, is the closure of the set of points $x \in X$ such that $|\mu|(U)>0$ for every neighbourhood $U$ of $x$. 
If $A \in \B(X)$ and $\mu \in \M(X;\R^m)$, the \textit{restriction} of $\mu$ to $A$ is the measure $\mu \zak A$ defined as $\mu \zak A (E):= \mu(A \cap E)$ for every $E \in \B(X)$.

\subsubsection{Push-forward}
Let $Y$ be a locally compact, separable metric space. A map $f \colon X \to Y$ is \textit{Borel} if  $f^{-1} (E) \in \B(X)$ for each $E \in \B(Y)$. If $\mu$ is a real or vector valued measure on $X$ we define the push-forward of $\mu$ through $f$ as the measure $f_\# \mu$ on $Y$, defined by $f_\# \mu(E):=\mu(f^{-1}(E))$ for each $E \in \B(Y)$. If $\f$ is a real or vector valued map on $Y$ integrable with respect to $f_\# \mu$ then
$
\int_Y \f \, d (f_\# \mu) = \int_X \f \circ f \, d\mu \,.
$
We recall that is $f$ is continuous and \textit{proper} ($f^{-1}(K)$ is compact if $K \subset Y$ is compact) and $\mu$ is finite, then also $f_\#\mu$ is finite.%

\subsubsection{Convergences}
Let $\{\mu_n\}$ be a sequence of measures on $X$. We say that $\mu_n$ \textit{narrowly converges} to $\mu$, in symbols $\mu_n \weak \mu$, if $\int_X \f  \, d\mu_n \to \int_X \f  \, d\mu$
for all $\f \in C_b(X)$, i.e., $\f$ continuous and bounded.
We say that $\mu_n$ \textit{weak* converges} to $\mu$, in symbols $\mu_n \weakstar \mu$, if $\int_X \f  \, d\mu_n \to \int_X \f  \, d\mu$ for all $\f \in C_0(X)$. We recall that $\f \in C_0(X)$ if $\f \in C(X)$ and for each $\e>0$ there exists $K \subset X$ compact such that $|\f|<\e$ in $X \smallsetminus K$.   Note that if $X$ is compact, then narrow convergence and weak* convergence coincide.

\subsubsection{Disintegration} \label{sec:disintegration}
Let $X$, $Y$ be locally compact, separable metric spaces. Consider $\{\mu_x \, \colon \, x \in X\}$ family of measures on $Y$. We say that the family $\{\mu_x\}$ is Borel if the map
$x \in X \mapsto \mu_x (B)$ is Borel measurable for every $B \subset Y$ measurable. Such condition implies that for every bounded Borel function $\f \colon X \times Y \to \R$, the map
$
x \in X \mapsto \int_Y \f (x,y) \, d\mu_x(y)
$	
is Borel measurable (see \cite[Prop 2.26]{afp}). We now state the disintegration theorem. For this version and the following properties, see \cite[Sections 2.3, 2.4]{abc} and \cite[Thm 2.28]{afp}.

\begin{theorem}[Disintegration] \label{thm:disint}
	Let $X$, $Y$ be locally compact, separable metric spaces. Let $\mu$ be a real (resp. vector valued) measure on $X \times Y$, $\pi \colon X \times Y \to X$ the projection on the first factor, and $\nu$ a positive measure on $X$, with the property that $\pi_\# |\mu| \ll \nu$. Then there exists a Borel family $\{\mu_x \, \colon \, x \in X\}$ of real (resp. vector valued) measures on $Y$ such that 
$\mu = \nu \otimes \mu_x$, that is,
\[
\int_{X \times Y} f(x,y) \, d\mu (x,y) = \int_X \left( \int_Y  f(x,y) \, d\mu_x(y) \right)  d\nu(x) \,,
\]
for every $f \in L^1(X \times Y; |\mu|)$.
A family $\{\mu_x\}$ such that $\mu=\nu \otimes \mu_x$ is called a disintegration of $\mu$ with respect to $\nu$. 
\end{theorem}

In the setting of Theorem \ref{thm:disint} the following properties hold:
\begin{enumerate}[(i)]
	\item If $\{\tilde{\mu}_x\}$ is another disintegration of $\mu$ with respect to $\nu$, then $\mu_x=\tilde{\mu}_{x}$ for $\nu$-a.e.~$x$,
\item If $\mu$ is finite, then also $\mu_x$ is finite for $\nu$-a.e.~$x$,
\item Let $E \in \B(X)$, $F \in \B(Y)$. Then $\mu(E \times F)=0$ if and only if $\mu_x(F)=0$ for $\nu$-a.e.~$x$. 
\end{enumerate}

\subsection{Narrow continuity results} \label{app:meas:narrow}

As in Section \ref{sec:continuity equation} we denote by $\curves$ the set of narrowly continuous curves $t \mapsto \rho_t$ and by $\pcurves$ the set of positive narrowly continuous curves. The remaining notations are the same as in Section \ref{sec:optimal}.

\begin{lemma} \label{lem:bound}
Let $\rho_t \in \pcurves$, $\rho:=dt \otimes \rho_t$, 
$m=dt \otimes (v_t \rho_t)$, $\mu = dt \otimes (g_t \rho_t)$, with $v_t \colon X \to \R^d$, $g_t \colon X \to \R$ measurable. Assume that $\de_t \rho + \Div m = \mu$
in the sense of \eqref{cont weak} and
\[
	E:= \int_X ( |v_t(x)|^2 + |g_t(x)|^2 ) \, d\rho_t(x) \, dt <\infty \,.
\]
Set $m :=\min_{t \in [0,1]} \rho_t (\overline{\Om}), M:=\max_{t \in [0,1]}\rho_t (\olom)$ and $C :=4 (m +E)$. Then $M \leq C$ and 
\[
\left|   \int_{\olom} \f \, (d\rho_t - d\rho_s) \right| \leq
\nor{\f}_{C^1} \sqrt{2CE} \, |t-s|^{1/2} \,,	 \quad \text{ for all } \,\, \f \in C^1(\overline{\Om}),\,\, 0 \leq s \leq t \leq 1\,.
\]
\end{lemma}

We remark that the above lemma is an easy generalization of Lemma 2.2 in \cite{kmv}, where the authors prove the same result under the restriction that $v_t=\nabla_x \, g_t$.

\begin{proof}
Since $\rho_t \in \pcurves$, we have $m,M<\infty$. Arguing as in the proof of Proposition~\ref{prop:cont rep}, we obtain that for all $\varphi \in C^1(\olom)$, the weak 
derivative of $\rho_t(\f):=\int_{\olom} \f \, d\rho_t$ satisfies
	\[
	\rho_t'(\f) = \int_{\olom} ( \nabla \f (x) \cdot v_t(x) + \f (x) g_t(x) ) \, d\rho_t (x) \,.
	\]
	In particular by applying twice the Cauchy--Schwarz inequality we get
\begin{align*}
	\left|  \rho_t(\f) - \rho_s (\f) \right| & = 
	\left| \int_s^t \int_{\olom} ( \nabla \f(x) \cdot v_\lambda (x) +  \f(x) g_\lambda (x)    ) \, d\rho_\lambda (x) \, d\lambda \right| \\ 
	& 
        \leq \sqrt{M}  \left(\int_s^t \int_{\olom} |\nabla \f(x) \cdot v_\lambda(x) + \f(x) g_\lambda(x)|^2 \, d\rho_\lambda(x) \, d\lambda \right)^{1/2} |t - s|^{1/2}  \\ 
	& \leq  \nor{\f}_{C^1} \sqrt{2M E} \, |t-s|^{1/2} \,. 
\end{align*}
The proof is concluded if we show that $M \leq C$. By applying the above estimate to $\f \equiv 1$ we get $|\rho_t(\overline{\Om}) - \rho_s(\overline{\Om}) | \leq \sqrt{2ME} $. If we pick $s$ and $t$ such that $\rho_s(\olom) = m$ and $\rho_t(\overline{\Om})=M$, then we get 
$M \leq m +  \sqrt{2ME} \leq m + \sqrt{2} (M+E)/2$, from which follows $M \leq 4 (m+E) = C$. 
\end{proof}

\begin{proposition} \label{prop:ub}
If $\rho_t \in \curves$ then  
$
\sup_{t \in [0,1]} \nor{\rho_t}_{\M(\overline{\Om})} < \infty \,.
$	
\end{proposition}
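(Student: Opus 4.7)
The plan is a direct application of the uniform boundedness principle (Banach--Steinhaus). Since $\olom$ is compact, the space $C(\olom)$ is a separable Banach space, and by the Riesz representation theorem the dual $C(\olom)^*$ may be identified isometrically with $\M(\olom)$ equipped with the total variation norm. Under this identification, the narrow topology on $\M(\olom)$ coincides with the weak* topology (since $C_b(\olom) = C(\olom)$ on a compact set), and the operator norm of $\rho \in C(\olom)^*$ equals $\nor{\rho}_{\M(\olom)}$.

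First, I would observe that narrow continuity of $t \mapsto \rho_t$ means precisely that for every fixed $\f \in C(\olom)$, the scalar map
\[
t \in [0,1] \mapsto \innerX{\rho_t}{\f} := \int_{\olom} \f \, d\rho_t
\]
is continuous. Since $[0,1]$ is compact, this map is bounded, so the family $\{\rho_t\}_{t \in [0,1]}$ is pointwise bounded on $C(\olom)$.

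Next, I would apply the Banach--Steinhaus theorem to this family, viewed as continuous linear functionals on the Banach space $C(\olom)$. This yields a uniform bound on the operator norms, which by the identification above is exactly
\[
\sup_{t \in [0,1]} \nor{\rho_t}_{\M(\olom)} = \sup_{t \in [0,1]} \sup_{\f \in C(\olom),\, \nor{\f}_\infty \leq 1} \left| \int_{\olom} \f \, d\rho_t \right| < \infty \,,
\]
which is the claim.

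There is essentially no obstacle to overcome here: the result is a textbook consequence of uniform boundedness once one notes that compactness of $\olom$ allows narrow continuity to be reinterpreted as weak* continuity against the full predual $C(\olom)$. The only point that deserves mention is that this argument genuinely uses the compactness of $\olom$ (so that constants and bounded continuous functions coincide with $C(\olom)$); without it, narrow continuity would only give weak* continuity against $C_b$, which is not the predual of $\M$, and Banach--Steinhaus could not be invoked in the same way.
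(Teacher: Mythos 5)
Your argument is correct and is essentially the paper's own proof: the authors likewise view each $\rho_t$ as a bounded linear functional on $C(\olom)$, use narrow continuity on the compact interval $[0,1]$ to get pointwise boundedness, and conclude by the uniform boundedness principle. Your additional remarks on the identification $\M(\olom)\cong C(\olom)^*$ and the role of compactness of $\olom$ are accurate but do not change the substance of the argument.
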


\begin{proof}
The curve $\rho_t$ defines a family $\{\rho_t\}_{t \in [0,1]}$ of functionals in $\M(\overline{\Om})$, via $\f \mapsto \int_{\olom} \f \, d\rho_t$. By narrow continuity, the map
	$t \mapsto \int_{\olom} \f \, d\rho_t$ is continuous for each $\f \in C(\overline{\Om})$, yielding that
	$\sup_{t \in [0,1]} \int_{\olom} \f \, d\rho_t < \infty$.
	The principle of uniform boundedness then implies the thesis.	
\end{proof}

\begin{proposition}[A refined version of Ascoli--Arzelà's Theorem] \label{ascoli}
  Let $K \subset \M(\overline{\Om})$ be sequentially weak* compact and $\rho^n_t \colon [0,1] \to \M(\overline{\Om})$ be such that $\rho_t^n \in K$ for all $t \in [0,1]$, $n \in \N$ and
\[	
\limsup_n   \|\rho_t^n - \rho_s^n\|_{C^1(\olom)^*} 
        \leq \omega (t,s) \, \text{ for every } \, t \in [0,1] \,,
\]
	where $\omega \colon [0,1] \times [0,1] \to [0,\infty)$ is continuous, symmetric and such that $\omega(t,t)=0$ for every $t \in [0,1]$. Then there exists a $C^1(\olom)^*$-continuous curve $\rho_t \colon [0,1] \mapsto \M (\overline{\Om})$ and a  subsequence $\{\rho_t^{n_k}\}$ such that
	$\rho_t^{n_k} \weakstar \rho_t$ for every $t \in [0,1]$.
\end{proposition}

The above statement is a particular case of \cite[Prop 3.3.1]{ags}, since $(\M(\overline{\Om}),\|\cdot\|_{C^1(\olom)^*})$ is a metric space and the $C^1(\olom)^*$-norm is weak* sequentially lower semicontinuous.

\smallskip
\noindent{\bfseries Proof of Proposition \ref{prop:cont rep}.}
Let $a \in C^\infty_c ((0,1))$, $b \in C^\infty (\overline{\Om})$. Set $\f(t,x):=a(t)b(x)$, so that $\f$ is a test function for \eqref{cont weak}. 
Define $\rho_t(b):=\int_{\olom} b(x) \, d\rho_t(x)$. Notice that $t \mapsto \rho_t(b)$ is measurable since $\rho_t$ is a Borel family. Moreover $t \mapsto \rho_t(b)$ belongs to $L^1((0,1))$ since 
$
\int_0^1 |\rho_t (b)| \, dt  \leq \nor{b}_{C^1(\olom)} \nor{\rho}_{\M(X)}$
where $\nor{b}_{C^1(\olom)} := \max \{\nor{b}_{\infty},\nor{\nabla b}_{\infty}\}$. Testing \eqref{cont weak} against $\f$ yields
that $\rho_t'(b)= \int_{\olom} \left[\nabla b(x) \cdot v_t(x) + b(x) g_t(x) \right]\, d\rho_t(x)$ weakly. 
Notice that for a.e.~$t \in [0,1]$ we have
\begin{equation} \label{var}
|\rho_t'(b)| \leq \nor{b}_{C^1 (\olom)} \, V(t)  \,, \quad V(t):= \int_{\olom} \left(|v_t(x)| + |g_t(x)| \right)\, d\rho_t(x) \,.
\end{equation}
In particular $V \in L^1((0,1))$ by assumption \eqref{cont int}, so that $\rho_t(b) \in W^{1,1}((0,1))$ with
\begin{equation} \label{sobolev}
\nor{\rho_t(b)}_{W^{1,1}((0,1))} \leq \nor{b}_{C^1 (\olom)} \left(  \nor{\rho}_{\M(X%
)} + \int_0^1 V(t) \, dt \right) \,.
\end{equation}
By the embedding $W^{1,1} ((0,1)) \hookrightarrow C([0,1])$, there exists a unique $\tilde{\rho}_t(b) \in C([0,1])$ such that $\tilde{\rho}_t(b)= \rho_t (b)$ for a.e.~$t \in [0,1]$, and
	$\nor{\tilde{\rho}_t(b)}_{\infty} \leq C \nor{b}_{C^1(\olom)}$ where $C >0$ does not depend on $b$, thanks to \eqref{sobolev}. Moreover
	\begin{equation} \label{differences}
	\tilde{\rho}_t(b) -\tilde{\rho}_s(b) =\int_{s}^t \rho'_\lambda(b) \, d\lambda	 \,\, \text{ for all } \,\, s,t \in [0,1] \,.
	\end{equation}
By density of $C^{\infty}(\olom)$ in $C^1(\olom)$, for each $t \in [0,1]$ the map $b \mapsto \tilde{\rho}_t(b)$ can be uniquely extended to an element of ${C^1(\olom)}^*$, since the maps $b \mapsto \rho_t(b)$ are linear and the extensions $\tilde{\rho}_t(b)$ are unique. This defines a bounded curve $t \mapsto \tilde{\rho}_t$ in ${C^1(\olom)}^*$. Such curve is uniformly continuous in $[0,1]$, since \eqref{var} and \eqref{differences} imply
$
|\tilde{\rho}_t(b) -\tilde{\rho}_s(b)| \leq \nor{b}_{C^1(\olom)} \, \int_s^t V(\lambda) \, d\lambda$ for every $b \in C^1(\olom)$.
We are left to prove that $\tilde{\rho}_t \in \M^+(\overline{\Om})$ for each $t \in [0,1]$. This follows from the fact that there is a Borel set $E \subset [0,1]$ with $|[0,1] \smallsetminus E|=0$ such that  $\rho_t \in \M^+(\overline{\Om})$ for every $t \in E$ and $\{\rho_t\}_{t \in E}$ is weak* sequentially precompact in $\M^+(\olom)$, since by Proposition \ref{prop:dis} we have that $\{\rho_t(\olom)\}$ is a.e.~bounded as $t \mapsto \rho_t(\olom)$ belongs to $BV((0,1))$. 
The weak* continuity of the curve $t \mapsto \tilde{\rho}_t$ in $\M^+(\overline{\Om})$ automatically follows from the one in ${C^1(\overline{\Om})}^*$: indeed let $\f \in C(\olom)$ and let $\{\f_n\}$ be a sequence in $C^1(\olom)$ such that $\nor{\f_n - \f}_{\infty} \to 0$ as $n \to \infty$. Then it is immediate to check that
$
\sup_{t \in [0,1]} | \tilde{\rho}_t(\f_n) - \tilde{\rho}_t (\f)  |\to 0$
so that $t \mapsto \tilde{\rho}_t (\f)$ is continuous, since it is uniform limit of continuous maps $t \mapsto \tilde{\rho}_t (\f_n)$. Finally let $\f \in C^1_c([0,1]\times \overline{\Om})$, $0 \leq t_1 \leq t_2 \leq 1$ and define $\f_\e (t,x):=a_\e(t) \f(t,x)$, where $a_\e \in C^\infty_c((t_1,t_2))$ is such that $0 \leq a_\e(t) \leq 1$, $\lim_{\e \to 0} a_\e (t) = \rchi_{(t_1,t_2)}(t)$ for almost every $t \in [0,1]$ and $\lim_{\e \to 0} a'_\e = \delta_{t_1} - \delta_{t_2}$ weakly* in $\M([0,1])$.
Testing \eqref{cont weak} against $\f_\e$ and passing to the limit as $\e \to 0$ (by continuity of $t \mapsto \int_{\olom}\f(t,x) \, d\tilde{\rho}_t (x)$ and \eqref{cont int}) yields \eqref{cont boundary}.

\section{Time-Dependent Bochner Spaces} \label{app:bochner}

In this appendix we assume \ref{H1}--\ref{H3} as in Section \ref{sec:func_sett}. Definitions of step functions, strong measurability, weak measurability, separably valued and integrability are as in Sections \ref{sec:meas}, \ref{sec:int}.

\subsection{Auxiliary results and proofs of Section~\ref{sec:hilbert setting}} \label{app:bochner:auxiliary}

Here we state and prove a suitable version of Egoroff's Theorem, as well as present the proofs of Theorems \ref{thm:pettis}, \ref{thm:dominated_convergence}, \ref{thm:completeness}.

\begin{proposition}[Egoroff] \label{prop:egoroff}
	Let $f_n ,f \colon [0,1] \to H$ be strongly measurable and such that, for a.e.~$t \in [0,1]$, 
$\lim_n \norh{f_n (t) - f(t)} = 0$. Then for each fixed $\e >0$ there exists a Lebesgue measurable set $E \subset [0,1]$ with $|E|< \e$ and such that $f_n \to f$ uniformly in $[0,1] \smallsetminus E$, that is, 
	\begin{equation} \label{claim egoroff}
	\lim_n   \sup_{ t \in [0,1] \smallsetminus E} \norh{f_n(t) - f(t)} = 0 \,.
	\end{equation}
\end{proposition}

\begin{proof}
The proof follows by replacing absolute values with the $H_t$ norms in the proof of the classic Egoroff Theorem. Indeed, since $f_n, f \colon [0,1] \to H$ are assumed to be strongly measurable, the map $t \mapsto \norh{f_n(t)-f(t)}$ is Lebesgue measurable (see Remark \ref{rem:strong}). Then the sets $E_k (n) := \cup_{m \geq k} \{ t \in [0,1] \, \colon \, \norh{f_m(t)-f(t)} \geq 1/n \}$ are measurable for each fixed $n,k \in \N$. Moreover, for $n$ fixed, we have that $E_{k+1} (n) \subset E_k(n)$ and $|E_k(n)| \searrow 0$ as $k \to \infty$, since we are assuming that $f_n \to f$ a.e.~in $[0,1]$. Let $\{k_n\}$ be an increasing sequence of indices such that $|E_{k_n} (n)|<\e / 2^n$. It is immediate to see that the measurable set $E:= \cup_{n=1}^{\infty} E_{k_n} (n)$ satisfies \eqref{claim egoroff}.
\end{proof}

\noindent{\bfseries Proof of Theorem \ref{thm:pettis}.} \textit{Part 1.}
Assume that $f$ is strongly measurable. Hence there exists a sequence $\{f_n\}$ of step functions $f_n \colon [0,1] \to D$ with $f_n= \sum_{j=1}^{N_n} \rchi_{E_{n,j}} \f_{n,j}$ such that 
$\norh{ i_t f_n (t) - f(t) } \to 0$ a.e. in $[0,1]$.
We claim that $f$ is weakly measurable: Indeed fix $\f \in D$ and define $\theta(t):= \inner{i_t \f}{f(t)}$, $\theta_n (t):=\inner{i_t \f}{i_t f_n(t)}$ for $t \in [0,1]$. Clearly $\theta_n$ is measurable for fixed $n$, by \ref{H3}. Moreover using Cauchy--Schwarz and \ref{H1} yields
$
| \theta_n (t) - \theta(t)|  \leq 
C \nor{\f}_D  \norh{i_t f_n (t) -f(t)}$,
so that $\theta_n \to \theta$ for a.e.~$t \in [0,1]$, implying that $\theta$ is measurable, and hence $f$ is weakly measurable. We will now show that $f$ is essentially separably valued. By definition $i_t f_n$ is strongly measurable and $i_tf_n \to f$ a.e., hence Proposition \ref{prop:egoroff} implies that for every $n \in \N$ there exists a measurable set $E_n \subset [0,1]$ with 
$|E_n|\leq 1/n$ and such that 
$\norh{ i_t f_n (t) - f(t) } \to 0$ uniformly on $[0,1] \smallsetminus E_n$.
Define the countable set 
$S := \cup_{n=1}^\infty f_n ([0,1])  \subset D$.
Let $E:= \cap_{n=1}^\infty E_n$ so that $|E|=0$. Fix $\e>0$ and $t \in [0,1] \smallsetminus E$. Hence there exists an index $n \in \N$ such that $t \in [0,1] \smallsetminus E_n$. By uniform convergence we conclude that 
$\norh{ i_t f_n (t) - f(t) } < \e$, for sufficiently large $n$. Therefore Definition \ref{def:strong} iii)~is satisfied by setting $\f := f_n (t)$.

\smallskip
\noindent \textit{Part 2.} Let $f$ be weakly measurable and essentially separably valued. Let $S =\{ \f_n \}\subset D$ be countable and $E \subset [0,1]$ with $|E|=0$ satisfying Definition \ref{def:strong}.
For $t \in [0,1]$ define
$\psi_n(t) :=1/\norh{i_t \f_n}$ if $i_t \f_n \neq 0$ and $\psi_n(t):=0$ otherwise.
Notice that $\psi_n$ is Lebesgue measurable for each fixed $n$, since $t \mapsto \norh{i_t \f_n}$ is measurable by \ref{H3}. We will now show that
\begin{equation} \label{dim6}
	\norh{f(t)} = \sup_n \psi_n(t) |\inner{i_t \f_n}{f(t)}    |  \quad \text{ for all} \quad t \in [0,1] \smallsetminus E \,.
\end{equation}
Indeed, the supremum never exceeds $\norh{f(t)}$ by the Cauchy--Schwarz inequality. Conversely, if $f(t)=0$ the equality is trivial, hence assume $f(t) \neq 0$. Fix $0<\e < \norh{f(t)}/2$. Since $f$ is essentially separably valued, there exists $\f_n \in S$ such that $\norh{i_t \f_n  - f(t)}<\e$. In particular  $i_t \f_n \neq 0$. Then
\[
\begin{aligned}
\norh{f(t)} & < \e +  \norh{i_t \f_n} = \e + \psi_n (t) | \inner{i_t \f_n}{i_t \f_n} | \\
  & \leq \e +  
   \psi_n (t) | \inner{i_t \f_n}{i_t\f_n - f(t)} | + \psi_n (t) | \inner{i_t \f_n}{f(t)} |\\
   & \leq  2\e + \psi_n (t) | \inner{i_t \f_n}{f(t)} |  \leq 2\e + \sup_n \psi_n (t) | \inner{i_t \f_n}{f(t)} | \,,
\end{aligned}
\]
and since $\e$ is arbitrarily small we conclude. Notice that the map $t \mapsto |\inner{i_t \f_n}{f(t)} |$ is measurable by weak measurability of $f$. Thus 
$
 t \mapsto  \psi_n (t) |\inner{i_t \f_n}{f(t)}    |  
$
is measurable, being product of measurable maps. Since the countable pointwise supremum of measurable functions is measurable, by \eqref{dim6} we conclude that $t \mapsto \norh{f(t)}$ is measurable. Also the map $\theta_n (t):= \norh{f(t) - i_t \f_n}$ is measurable at $n$ fixed, as
\[
\norh{ f(t) - i_t \f_n}^2 = \norh{f(t)}^2 - 2 \inner{i_t \f_n}{f(t)} + \norh{i_t \f_n}^2
\]
is a sum of measurable functions, where the second element is measurable by weak measurability of $f$ and the third by \ref{H3}. Fix $\e >0$ and define the measurable sets
$E_n:=\{ t \in [0,1] \, \colon \, \theta_n(t) < \e \}$ and the map
$g \colon [0,1] \to D$ by setting
$g(t) := \f_n$  if $t \in E_n \smallsetminus \cup_{j=1}^{n-1} E_j$ for some $n$, and $g(t)=0$ otherwise. 
Note that for $t \in [0,1] \setminus E$ there exists some index $n$ such that $\norh{f(t)-i_t \f_n}<\e$. Therefore, by picking the smallest $n$ such that this condition is verified, we have $g(t)=\f_n$. Since this is true for each $t \in [0,1] \setminus E$, this means that $\norh{f(t)-i_t g(t)}<\e$ a.e.~in $[0,1]$.
Hence we can approximate $f$ essentially uniformly by $i_t g(t)$ with $g$ countably valued. By choosing $\e = 1/n$ we obtain countably valued functions $g_n \colon [0,1] \to D$ such that 
\begin{equation} \label{dim11}
\norh{f(t) - i_t g_n(t)}<1/n \quad \text{ for every } \,\, t \in E \,,
\end{equation}
where $|[0,1] \setminus E|=0$. Note that by definition $g_n = \sum_{j=1}^\infty \rchi_{E_{n,j}} \f_{n,j}$ with $\{E_{n,j}\}_{j \in \N}$ measurable partition of $[0,1]$. Therefore for every $n \in \N$, there exists $k_n$ such that the set $\cup_{j=1}^{k_n} E_{n,j}$ satisfies 
\begin{equation} \label{sets Fn}
\left|[0,1] \smallsetminus \cup_{j=1}^{k_n} E_{n,j} \right|< \frac{1}{n^2} \,.
\end{equation}
Set 
$
F_n:=\cap_{s=n}^{\infty} \cup_{j=1}^{k_s} E_{s,j}$, $F:=\cup_{n=1}^\infty F_n$.
In this way $|[0,1] \smallsetminus F|=0$ by \eqref{sets Fn}, since
$
\left| [0,1] \smallsetminus F_n \right| \leq \sum_{s=n}^{\infty} \frac{1}{s^2} \to 0$ as $n \to \infty$. 
Now define step functions $f_n \colon [0,1] \to D$ obtained by truncating $g_n$, that is, $f_n :=\sum_{j=1}^{k_n} \rchi_{E_{n,j}} \f_{n,j}$. If we prove that 
\begin{equation} \label{claim truncation}
\lim_n \norh{f(t) - i_t f_n(t)} = 0 \,\, \text{ for every } \,\, t \in F \cap E\,,
\end{equation}
then we conclude that $f$ is strongly measurable, since $|[0,1]\smallsetminus (F \cap E)|=0$. %
In order to show \eqref{claim truncation}, fix $\e>0$ and $t \in F \cap E$. By using \eqref{dim11} and \ref{H1} we have that for all $n \in \N$
\begin{equation} \label{claim  trunc}
\begin{aligned}
\norh{f(t) - i_t f_n(t)} & \leq \norh{f(t) - i_t g_n(t)} + 
\norh{i_t g_n(t) - i_t f_n(t)} \\ & < \frac{1}{n} + C \nor{\sum_{j=k_n +1}^{\infty} \rchi_{E_{n,j}} (t) \, \f_{n,j}}_{D} \,.
\end{aligned}
\end{equation}
Since $t \in F$, by definition there exists an index $N_t$ such that $t \in F_{N_t}$. Hence $t \in \cup_{j=1}^{k_n} E_{n,j}$ for every $n \geq N_t$, so that 
$\sum_{j=k_n +1}^{\infty} \rchi_{E_{n,j}} (t) \, \f_{n,j} = 0$ for each $n \geq N_t$.
Set $n_{\e,t} :=\max\{ N_t, 1/\e\}$. From \eqref{claim  trunc} we have
$\norh{f(t) - i_t f_n(t)} < \e$ for every $n \geq n_{\e,t}$, implying
\eqref{claim truncation}.

\medskip

\noindent{\bfseries Proof of Theorem \ref{thm:dominated_convergence}.} 
Since $f_n \to f$ a.e., the map $f$ in strongly measurable and $\theta_n(t):= \norh{f_n(t) - f(t)}$ is Lebesgue measurable. By assumption we have that $\theta_n \to 0$ and $\theta_n \leq 2 g$ a.e.~in $[0,1]$. Therefore, by the classic dominated convergence theorem, each $\theta_n$ is integrable and $f_n \to f$ strongly in $L^1([0,1];H)$. %
To conclude integrability for $f$ it is sufficient to employ triangle inequality, integrability of $\theta_n$ and Theorem \ref{thm:int}. %

\medskip

\noindent {\bfseries Proof of Theorem \ref{thm:completeness}.}
The fact that $\nor{\cdot}_{L^p}$ is a norm follows immediately from the classic case as well as the fact that the map $t \mapsto \norh{f(t)}^p$ is measurable for each $p \geq 1$, when $f$ is assumed to be strongly measurable (see Remark \ref{rem:strong}). Moreover $\innerL{\cdot}{\cdot}$ is an inner product, since the spaces $H_t$ are Hilbert. In order to show completeness, it is sufficient to follow the lines of the proof of the classic Riesz--Fischer theorem. 
Let $1 \leq p < \infty$
and let $\{f_n\}$ be a Cauchy sequence in $L^p([0,1];H)$. In any normed linear space, a Cauchy sequence having a convergent subsequence converges to the same limit. Therefore, up to extracting a subsequence, we can assume that
\begin{equation} \label{cauchy1}
\nor{f_m-f_n}_{L^p} < \frac{1}{2^n} \,\, \text{ for every } \,\, n \,\, \text{ and } \,\, m >n \,.
\end{equation}
For every $n \in \N$ define measurable sets $E_n:=\{ t \in [0,1] \, \colon \, \norh{f_{n+1}(t)-f_n(t)} \geq 1/n^2 \}$, so that $\rchi_{E_n}(t)/n^2 \leq \norh{f_{n+1}(t)-f_n(t)}$ a.e.~in $[0,1]$ and $|E_n|< n^{2p}/2^{np}$ by \eqref{cauchy1}. In particular one has $\sum_n |E_n| < \sum_n n^{2p}/2^{np} < \infty$. Define $F_n:=\cup_{m \geq n} E_m$, so that $\{F_n\}$ is a nested sequence of measurable sets, with $|F_n| \leq \sum_{m \geq n}m^{2p}/2^{mp} \to 0$ as $n \to \infty$. Finally set $F:=\cap_n F_n$, which satisfies $|F|=0$. By definition, if $t \in [0,1] \smallsetminus F$, then 
$\norh{f_{n+1}(t)-f_n(t)} < n^{-2}$ for $n$ sufficiently large.
Hence for $t \in [0,1] \smallsetminus F$, $m >n$ and $n$ sufficiently large one has
$\norh{f_{m}(t)-f_n(t)} \leq  \sum_{j \geq n} \norh{f_{j+1}(t)-f_j(t)} < \sum_{j \geq n} 1/j^2$,
and since $\sum_{j \geq n} 1/j^2 \to 0$ as $n \to \infty$, we conclude that $\{f_n(t)\}$ is a Cauchy sequence in $H_t$. For $t \in  [0,1] \smallsetminus F$ denote by $f(t) \in H_t$ the strong  limit of $\{f_n(t)\}$, which exists since $H_t$ is complete. For $t \in F$ set $f(t)=0$. This defines a map $f \colon [0,1] \to H$, which is strongly measurable since it is the a.e.~pointwise limit of a sequence of strongly measurable maps (see Remark \ref{rem:strong}). Moreover, by the a.e.~pointwise convergence, we also have that $\norh{f_n(t)} \to \norh{f(t)}$ as $n \to \infty$ for a.e.~$t$. Since the maps $t \mapsto \norh{f_n(t)}, t \mapsto \norh{f(t)}$ are measurable, we can apply Fatou's Lemma and conclude that%
$
\int_{0}^1 \norh{f(t)}^p \, dt \leq \liminf_n \int_{0}^1 \norh{f_n(t)}^p \, dt$, which is bounded since $\{f_n\}$ is a Cauchy sequence in $L^p([0,1];H)$. Hence $f \in L^p([0,1];H)$. Finally, one more application of Fatou's Lemma combined with %
\eqref{cauchy1} yields $\nor{f_n-f}_{L^p} \to 0$ as $n \to \infty$.

\subsection{Comparison with classic Bochner theory} \label{app:bochner:comparison}

In this section we will investigate integrability properties for $i_t^*f \colon [0,1] \to D^*$ when 
$f \in L^1([0,1];H)$. Since the codomain of $i_t^* f$ is the fixed space $D^*$, it makes sense to check whether $i_t^* f$ is integrable in a classic sense. Specifically, in Proposition \ref{prop:gelfand}, we will see that 
$i_t^* f$ is always Gelfand integrable.
On the other hand, $i_t^*f$ is not always Bochner integrable, as we show in Example \ref{ex2}. The main impediment is that $i_t^*f$ is not strongly measurable in general. Finally in Proposition \ref{prop:bochner} we will give sufficient conditions under which $i_t^*f$ is Bochner integrable.

\begin{proposition}\label{prop:gelfand}
Assume that $f \in L^1([0,1];H)$. Then $i_t^*f$ is Gelfand integrable in $D^*$, that is, for each measurable set $E \subset [0,1]$ there exists an element $I_E(i_t^*f) \in D^*$ such that
\begin{equation} \label{show1}
\inn{I_E(i_t^*f)}{\f}_{D^*,D} = \int_E \inn{i_t^* f}{\f}_{D^*,D} \,dt \,, \,\,\, \text{ for all }\,\, \f \in D\,. 
\end{equation}
\end{proposition}

\begin{proof}
Let $\f \in D$. By duality one has
$\inn{i_t^*f(t)}{\f}_{D^*,D} = \inner{f(t)}{i_t \f}$.
Therefore $i_t^* f$ is weak* measurable since $f \colon [0,1] \to H$ is weak measurable. By \ref{H1},
	\[
	\int_0^1 |\inn{i_t^* f(t)}{\f}_{D^*,D}| \,dt = 
	 \int_0^1 |\inner{i_t\f}{f(t)}| \,dt
	\leq C \nor{\f}_D  \int_0^1 \norh{f(t)} \, dt < \infty
	\]
	since $f$ is integrable. This shows that $t \mapsto \inn{i_t^* f(t)}{\f}_{D^*,D}$ belongs to $L^1([0,1])$ for each $\f \in D$. Hence $i_t^*f$ is Gelfand integrable by Theorem 11.52 in \cite{aliprantis}, and \eqref{show1} holds.
\end{proof}

\begin{example}[Radial sampling] \label{ex2}
Let $\Om:=B_1(0)=\{ x\in \R^2 \, \colon \, |x|<1\}$ and for $t \in [0,1]$ define the lines through the origin
$S_t :=\{ ( \cos(\pi t)s, \sin (\pi t) s) \, \colon \,  |s| <1 \}$, so that $S_t \subset \Om$. Define $D=C_0(\Om)$ equipped with the supremum norm. Hence $D^*=\M(\Om)$. Define $H_t:=L^2_{\sigma_t}(S_t)$ with inner product $\inn{h_1}{h_2}_{H_t}:=\int_{S_t} h_1  h_2 \, d\sigma_t$, $\sigma_t:=\mathcal{H}^1 \zak S_t$ where $\mathcal{H}^1$ is the 1-dimensional Hausdorff measure. Finally define $i_t \colon D \to H_t$ by $i_t \f = \f|_{S_t}$. 
It is straightforward to check that \ref{H1}-\ref{H3} are satisfied, so that we can consider the space $\Ltwo$ defined as in \eqref{L2}.

We will now construct a map $f$ belonging to $\Ltwo$, compute the Gelfand integral of $i_t^*f \colon [0,1] \to D^*$ and show that $i_t^*f$ is not Bochner integrable.  To this end, notice that for a map $h \colon \Om \to \R$ such that $h/|x| \in L^1(\Om)$ we have that
\begin{equation} \label{coarea radial}
\int_0^1 \int_{S_t} h \, d\mathcal{H}^1  dt =
\int_\Om \frac{h}{\pi |x|} \, dx \,.
\end{equation}
Note that \eqref{coarea radial} is an easy consequence of the classical coarea formula \cite[Thm 3.11]{evansgariepy}, and its proof is left to the reader.
Let now $\tilde{f} \colon \Om \to \R$ be such that $\tilde{f} \not\equiv 0$ and $\tilde{f}/|x| \in L^2(\Om)$. 
By applying \eqref{coarea radial} to $|\tilde{f}|^2$ and by the assumptions on $\tilde{f}$ we have that $\tilde{f}|_{S_t}$ belongs to $H_t$ for a.e.~$t \in [0,1]$. 
Define $f \colon [0,1] \to H$ by 
$f(t):=\tilde{f}|_{S_t}$. 
Notice that $f$ is strongly measurable, since $\tilde{f}$ can be approximated in $\Om$ by $C_0$ functions. Moreover by \eqref{coarea radial} we infer
\[
\int_0^1 \norh{f(t)}^2 \, dt =
\int_0^1 \int_{S_t} |\tilde{f}|^2\, d\mathcal{H}^1 dt =
\int_\Om \frac{|\tilde{f}|^2}{\pi |x|} \, dx \,,
\]
which is finite by assumption on $\tilde{f}$. Hence $f$ is integrable by Theorem \ref{thm:int}, and it belongs to $\Ltwo$. The Gelfand integral of $i_t^*f$, which exists by Proposition \ref{prop:gelfand}, is given by 
\[
I(i_t^* f)=\frac{\tilde{f}}{\pi |x|} \, d\mathcal{H}^2 \zak \Om\,.
\]
The above follows immediately by applying \eqref{coarea radial} to $\f \tilde{f}$ with $\f \in D$.
However $i_t^*f$ is not Bochner integrable, since it is not strongly measurable in the classic sense \cite[Ch II]{diestel}: For every $E \subset [0,1]$ with $|E|=0$ we have that the set $i_t^*f ([0,1] \smallsetminus E)$
is not norm separable in $\M(\Om)$. Indeed, it is easy to show that for a.e.~$w \neq t$ 
\[
\nor{f(t) \mathcal{H}^{1} \zak S_t - f(w) \mathcal{H}^{1} \zak S_w}_{\mathcal{M}(\Om)} = \int_{S_t} |\tilde{f}| \, d \mathcal{H}^{1} + \int_{S_w} |\tilde{f}| \, d \mathcal{H}^{1} \,.  
\]
Since $\tilde{f} \not\equiv 0$, we infer that $i_t^*f (F)$ is a discrete set for any $F \subset [0,1]$ with $|F|>0$. Therefore $i_t^*f (F)$ is norm separable if and only if $F$ is countable, which is never the case. 
Hence $i_t^*f$ is not essentially separably valued, and the classic Pettis Theorem \cite[Ch II.1, Thm 2]{diestel} implies that $i_t^*f$ is not strongly measurable and hence not Bochner integrable.  
  \end{example}

\begin{proposition} \label{prop:bochner}
Assume that $i_t^* i_t (S) \subset D^*$	is essentially norm separable for each countable set $S \subset D$ and that $D$ is reflexive.
If $f \in L^1([0,1];H)$ then $i_t^* f \colon [0,1] \to D^*$ is Bochner integrable.
\end{proposition}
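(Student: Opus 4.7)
The plan is to apply the classic Pettis measurability theorem (Theorem \ref{thm:pettis classic}) together with the Bochner integrability criterion (Theorem \ref{thm:bochner classic}). Since $i_t^* f$ takes values in the fixed Banach space $D^*$, it suffices to verify that (a) $i_t^*f$ is weakly measurable, (b) $i_t^*f$ is essentially separably valued, and (c) $\int_0^1 \|i_t^*f(t)\|_{D^*}\, dt<\infty$.

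For (a), weak measurability of $i_t^* f \colon [0,1]\to D^*$ means that $t\mapsto \langle \xi, i_t^*f(t)\rangle_{D^{**},D^*}$ is Lebesgue measurable for every $\xi\in D^{**}$. Here the reflexivity assumption on $D$ enters: it identifies $D^{**}$ with $D$, reducing the requirement to measurability of $t\mapsto \langle i_t^*f(t),\varphi\rangle_{D^*,D}$ for every $\varphi\in D$. But this equals $\langle f(t),i_t\varphi\rangle_{H_t}$ by definition of $i_t^*$, and the latter map is measurable since $f$ is strongly (hence weakly) measurable.

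For (b), I would invoke Corollary \ref{cor1} to produce countably-valued maps $f_n=\sum_{j=1}^\infty \chi_{E_{n,j}}\varphi_{n,j}$ with $\|i_tf_n(t)-f(t)\|_{H_t}\to 0$ uniformly for a.e.~$t$. Applying $i_t^*$ and using \ref{H1} gives
\[
\|i_t^* i_t f_n(t) - i_t^* f(t)\|_{D^*}\ \leq\ C\,\|i_t f_n(t)-f(t)\|_{H_t},
\]
so $i_t^* i_t f_n\to i_t^* f$ uniformly a.e.~in $D^*$. Since $i_t^* i_t f_n(t)=\sum_j \chi_{E_{n,j}}(t)\,i_t^* i_t\varphi_{n,j}$ takes values in the set $\{i_t^* i_t\varphi : t\in[0,1],\ \varphi\in S_n\}$ with $S_n:=\{\varphi_{n,j}\}_j$ countable, the hypothesis on $i_t^* i_t$ guarantees that this range is, up to a $t$-null set $N_n$, contained in a norm-separable subset $\Sigma_n\subset D^*$. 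The countable union $\bigcup_n\Sigma_n$ is still norm-separable, and outside the null set $N:=\bigcup_n N_n$ the values $i_t^*f(t)$ lie in $\overline{\bigcup_n\Sigma_n}$, which is separable. Hence $i_t^*f$ is essentially separably valued, and by Pettis it is strongly measurable in $D^*$.

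For (c), strong measurability makes $t\mapsto\|i_t^*f(t)\|_{D^*}$ Lebesgue measurable, and \ref{H1} together with $\|i_t^*\|\le C$ yields the pointwise bound $\|i_t^*f(t)\|_{D^*}\le C\|f(t)\|_{H_t}$. Since $f\in L^1([0,1];H)$, the right hand side is integrable by Theorem \ref{thm:int}, so $\int_0^1\|i_t^*f(t)\|_{D^*}\,dt<\infty$, and Theorem \ref{thm:bochner classic} concludes Bochner integrability. The only delicate step is (b): it is precisely the separability hypothesis on $i_t^* i_t(S)$ that rules out the pathology encountered in Examples \ref{ex1}--\ref{ex2}, where mutually singular measures prevent any norm-separable range.
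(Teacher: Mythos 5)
Your proposal is correct and follows essentially the same route as the paper's proof: reflexivity converts weak* measurability of $i_t^*f$ into genuine weak measurability, the separability hypothesis on $i_t^*i_t$ of a countable set (combined with $\nor{i_t^*}\leq C$ from \ref{H1}) yields that $i_t^*f$ is essentially separably valued, and the classic Pettis and Bochner criteria finish. The only cosmetic difference is that you extract the countable set via Corollary \ref{cor1}, whereas the paper takes it directly from Definition \ref{def:sep val} via Theorem \ref{thm:pettis}; both reduce to the same application of the hypothesis.
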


\begin{proof}
Since $f \colon [0,1] \to H$ is strongly measurable, it is also weakly measurable and essentially separably valued by Theorem \ref{thm:pettis}. 
We start by showing that weak measurability for $f$ implies weak measurability for $i_t^*f$ in the classic sense. Indeed by reflexivity the canonical injection $j \colon D \to D^{**}$ is also surjective. Therefore for each $\f^{**} \in D^{**}$ there exists a unique $\f \in D$ with $j(\f)=\f^{**}$ and we have 
$\inn{\f^{**}}{i^*_t f(t)}_{D^{**},D^{*}}= \inner{f(t)}{i_t \f}$, which is measurable since $f$ is weakly measurable. Now let 
$S=\{\f_n\} \subset D$ and $E \subset [0,1]$ measurable with $|E|=0$ and such that Definition \ref{def:strong} is satisfied. Therefore for every $\e>0$ and $t \in [0,1] \smallsetminus E$ there exists $\f_n \in S$ such that $\norh{i_t \f_n -  f(t)} <  \e /C$. Therefore by \ref{H1} we can estimate
$\nor{i_t^*i_t \f_n - i_t^* f(t)}_{D^*} \leq 
C \, \norh{i_t \f_n -  f(t)} <  \e$.
Hence the points of $i_t^*f([0,1]\smallsetminus E )$ are arbitrarily close to 
$\{i_t^*i_t \f_n \,,\, n \in \N ,\, t \in [0,1] \smallsetminus E\}$.
Since $i_t^*i_t (S)$ is assumed to be essentially separable in $D^*$, there exists a measurable set $F \subset [0,1]$ with $|F|=0$ such that $\{i_t^*i_t \f_n \,,\, n \in \N ,\, t \in [0,1] \smallsetminus F\}$
is separable in $D^*$. By defining $\tilde{E}:=E \cup F$ we obtain that also $i_t^*f([0,1]\smallsetminus \tilde{E} )$ is norm separable, hence $i_t^*f$ is essentially separably valued in the classic sense. By the classic Pettis Theorem \cite[Ch II.1, Thm 2]{diestel} we conclude that $i_t^*f$ is strongly measurable. Finally (H2) implies that $\int_0^1 \nor{i_t^* f(t)}_{D^*} \, dt \leq C \int_0^1 \norh{f(t)} \, dt < \infty$. By \cite[Ch II.2, Thm 2]{diestel}, it follows that $i_t^*f$ is Bochner integrable.
\end{proof}

\end{document}